\numberwithin{equation}{section}
\numberwithin{figure}{section}
  \theoremstyle{plain}
  \newtheorem*{lem*}{Lemma}
  \theoremstyle{plain}
  \newtheorem*{cor*}{Corollary}
  \theoremstyle{plain}
  \newtheorem{lem}{Lemma}
  \theoremstyle{plain}
  \newtheorem{theorem}{Theorem}
  \theoremstyle{remark}
  \theoremstyle{definition}
  \newtheorem{definition}{Definition}
\newtheorem{notation}{Notation}
\newtheorem{example}{Example}
\newtheorem{conjecture}{Conjecture}
\newcommand{\Nr}{\textnormal{Nr}}
\newcommand{\Vol}{\textnormal{Vol}}
\newcommand{\Tr}{\textnormal{Tr}}
\newcommand{\End}{\textnormal{End}}
\newcommand{\OO}{\mathcal{O}}
\newcommand{\CCC}{c}
\newcommand{\DDD}{c}
\newcommand{\R}{\mathbb R}
\newcommand{\Z}{\mathbb Z}
\newcommand{\Q}{\mathbb Q}
\newcommand{\F}{\mathbb F}
\newcommand{\C}{\mathbb C}
\newcommand{\gen}[1]{\langle #1 \rangle}
\newcommand{\norm}[1]{\Vert #1 \Vert}
\renewcommand{\comment}[1]{}
\title{Constructing supersingular elliptic curves with a given endomorphism ring}
\author{I. Chevyrev \and S. D. Galbraith}
\address{Mathematical Institute\\ University of Oxford\\ United Kingdom.}
\email{ilya.chevyrev@maths.ox.ac.uk}
\address{Mathematics Department\\ University of Auckland \\ New Zealand.}
\email{S.Galbraith@math.auckland.ac.nz}
\date{}
\begin{document}

\begin{abstract}
Let $\OO$ be a maximal order in the quaternion algebra $B_p$ over $\Q$ ramified at $p$ and $\infty$. The paper is about the computational problem: Construct a supersingular elliptic curve $E$ over $\F_p$ such that $\End(E) \cong \OO$.
We present an algorithm that solves this problem by taking gcds of the reductions modulo $p$ of Hilbert class polynomials.

New theoretical results are required to determine the complexity of our algorithm.
Our main result is that, under certain conditions on a rank three sublattice $\OO^T$ of $\OO$, the order $\OO$ is effectively characterized by the three successive minima and two other short vectors of $\OO^T$. The desired conditions turn out to hold whenever the $j$-invariant $j(E)$, of the elliptic curve with $\End(E) \cong \OO$, lies in $\F_p$. We can then prove that our algorithm terminates with running time $O(p^{1+\varepsilon})$ under the aforementioned conditions.

As a further application we present an algorithm to simultaneously match all maximal order types with their associated $j$-invariants. Our algorithm has running time $O(p^{2.5 + \varepsilon})$ operations and is more efficient than Cervi\~no's algorithm for the same problem.
\end{abstract}

\maketitle

\section{Introduction}

Let $p$ be a prime and $E$ a supersingular elliptic curve over $\F_{p^2}$. Then $\End(E)$ is a maximal order in the quaternion algebra $B_p$ ramified  exactly at $p$ and $\infty$ (all notation and definitions are explained in Section~\ref{sec:background}).
A special case of interest is when $E$ is defined over $\F_p$, in which case $\End( E )$ contains an  element $\pi$ such that $\pi^2 = -p$ (the Frobenius).
Supersingular elliptic curves have a number of algorithmic applications~\cite{ChGoLa09,Sut13}.

Ibukiyama~\cite{Ibukiyama} has given an explicit description of all maximal orders in $B_p$ that contain $\sqrt{-p}$. For example, let $p \equiv 1 \pmod{4}$ and let $\OO$ be such a maximal order in $B_p$.
Then there is a prime $q \equiv 3 \pmod{8}$ such that $( \tfrac{-q}{p} ) = -1$, and a $\Q$-algebra isomorphism $\phi : B_p \to \Q + \Q i + \Q j + \Q k$ where $i^2 = -p$, $j^2  = -q$ and $k = i j = - j i $, such that $\phi( \OO ) \cong \Z + \Z (1+j)/2 + \Z (i + k)/2 + \Z (rj + k)/q$ where $r$ is any integer such that  $q \mid (r^2 + p )$.

Consider the $\Z$-module $\OO^T = \{ 2x - \Tr( x ) : x \in \OO \}$ of rank $3$ (we discuss this object in greater detail in Section~\ref{sec:OT}). Note that $y \in \OO^T$ implies $\Tr( y ) = 0$ and so $\OO^T$ is a subset of the pure quaternions.
Fix a $\Z$-module basis $\{ \omega_1, \omega_2, \omega_3 \}$ for $\OO^T$ and consider the ternary quadratic form $Q( x, y, z ) = \Nr( x \omega_1 + y \omega_2 + z \omega_3 )$ giving a norm on $\OO^T$.
Kaneko~\cite{Kaneko} has shown, in the special case where $\sqrt{-p} \in \OO$, that there is an element $x \in \OO^T$ of norm at most $4\sqrt{p}/\sqrt{3}$.

Let $\OO'$ be another maximal order in the same quaternion algebra $B_p$ and let $Q'$ be the ternary form associated with $\OO'$.
A natural question is whether $Q$ determines $\OO$.  In other words, if $Q'$ is equivalent to $Q$ in the sense of quadratic forms then is $\OO'$ isomorphic to $\OO$?
We will show that this is the case.
Indeed, our main result (Theorem~\ref{maintheorem}) is much stronger: It states that if the forms $Q$ and $Q'$ are such that $Q'$ represents the successive minima of $Q$ (which is not the same as saying that the forms have the same successive minima), plus some other mild conditions, then $\OO \cong \OO'$, and hence $Q$ and $Q'$ are equivalent.
Schiemann~\cite{Schiemann} has shown that two ternary quadratic forms are determined up to equivalence by their theta series.
Our result may be viewed as a strong form of Schiemann's theorem in the case where both forms arise from maximal orders in the same quaternion algebra.

Our work is motivated by several computational questions about supersingular elliptic curves. One problem is, given a maximal order $\OO$ in $B_p$, to compute an elliptic curve $E$ over $\F_{p^2}$ such that $\End(E) \cong \OO$.
A second problem is to compute a list of all isomorphism classes of supersingular elliptic curves $E $ over $\F_{p^2}$ (or over $\F_p$ in a restricted case) together with a description of $\End(E)$.
To solve both problems we use Hilbert class polynomials. The main idea is that if $\OO \cong \End( E) $ and if $\OO^T$ has an element of small norm $d$ then $E$ has a ``complex multiplication'' of degree $d$ and so $j(E)$ is a root of the Hilbert class polynomial $H_{-d}(x)$.
The first problem does not seem to have been considered in the literature previously.
Cervi\~no~\cite{Cervino} has given an algorithm to solve the second problem that seems to run in $O( p^{3 + \varepsilon} )$ operations (or $O( p^{2.5 + \varepsilon} )$ in the restricted case over $\F_p$); our approach leads to a superior running time of $O( p^{2.5 + \varepsilon} )$ operations (or $O( p^{1.5 + \varepsilon} )$ in the restricted case).

\section{Background and main results}\label{sec:background}


Let $B_p$ be the quaternion algebra over $\Q$ ramified exactly at $p$ and at $\infty$.
A general reference for many of the facts in this section is Vign{\' e}ras~\cite{Vig80}. We recall that $B_p$ is a $4$-dimensional division $\Q$-algebra containing $\Q$ with an anti-involution $x \mapsto \overline x$. Define the reduced trace $\Tr(x) = x + \overline{x}$. Then $B_p$ is equipped with the symmetric positive definite bilinear form $\Tr(x\overline y)$ and the associated positive-definite quadratic form $\Nr(x)=x\overline x$. Every element $x\in B_p$ satisfies its characteristic equation $x^2 -\Tr(x)x +\Nr(x) = 0$. We define $B_p^0$ to be the subring of $B_p$ of elements of zero trace.

We let $\OO$ and $\OO'$ be orders of $B_p$. We recall that an \emph{order} of $B_p$ is a subring of $B_p$ that contains $\Z$ and has $4$ linearly independent generators as a $\Z$-module. We recall furthermore that for all $x\in\OO$, we have $\Tr(x),\Nr(x)\in\Z$. Finally, we say that $\OO$ and $\OO'$ are of the same \emph{type} if there exists non-zero $c\in B_p$ such that $c\OO c^{-1}=\OO'$, in which case we write $\OO \sim \OO'$. 

An order $\OO$ of $B_p$ is called \emph{maximal} if it is not properly contained in any other order. Deuring showed that, associated to a maximal order $\OO$, there exists either one supersingular $j$-invariant $j(\OO)\in\F_p$, or a conjugate pair $j(\OO),\overline{j(\OO)}\in\F_{p^2}$, such that $\End(E(j(\OO))) = \End(E(\overline{j(\OO)}))= \OO$, where $E(j)$ is the unique (up to isomorphism) elliptic curve with $j$-invariant $j$.
We let the total number of maximal order types be $t_p$, the \emph{type number} of $B_p$.

If $\# \OO^* > 2$ then $j( \OO ) \in \{ 0, 1728 \}$ and the problems considered in the paper are all straightforward. More precisely, $j(\OO) =0$ if and only if there are units of (multiplicative) order $3$ and $6$, and  $j(\OO)=1728$ if and only if there is a unit of order $4$. Hence, unless otherwise stated, we assume that $\# \OO^* = 2$.

Let $V$ be any vector space over $\Q$ with a positive-definite quadratic form $\Nr$. For arbitrary vectors $v_1,v_2,\ldots,v_n \in V$, we denote by
\[
\Lambda = \gen{v_1,v_2,\ldots,v_n} := \{a_1v_1 + a_2v_2 + \ldots + a_n v_n \mid a_1,a_2,\ldots,a_n \in \Z\}
\]
the standard lattice generated by these vectors.

We say that a non-zero lattice element $x\in \Lambda$ is {\it primitive} if there do not exist $y\in\Lambda$ and $a\in\Z$ such that $ay = x$ and $a \neq \pm 1$. If $x = a_1v_1 + \ldots + a_n v_n$, then $x$ is primitive if and only if $\gcd(a_1,\ldots,a_n) = 1$. We also say that an integer $k$ is {\it represented} by $\Lambda$ if there exists $x\in\Lambda$ such that $\Nr(x) = k$, in which case we also say that $x$ {\it represents} $k$. Furthermore, we say that $x$ {\it optimally represents} $k$ if $x$ is primitive.

If $k\neq 0$, we say that $k$ is represented by $\Lambda$ with {\it multiplicity} $\theta_\Lambda(k)$, where
\[
\theta_\Lambda(k) = \frac{1}{2} \# \{(a_1,\ldots,a_n) \in \Z^n \mid \Nr(a_1v_1 + \ldots + a_n v_n) = k\},
\]
and likewise $k$ is represented optimally by $\Lambda$ with {\it optimal multiplicity} $\theta'_\Lambda(k)$, where
\[
\theta'_\Lambda(k) = \frac{1}{2} \# \{(a_1,\ldots,a_n) \in \Z^n \mid \Nr(a_1v_1 + \ldots + a_n v_n) = k, \gcd(a_1,\ldots,a_n) = 1\}.
\]
The factor $1/2 = 1/\#\OO^*$ is to avoid counting both $x$ and $-x$, since $\Nr(x)=\Nr(-x) = k$ is effectively the same representation.

Turning to the case $V = B_p$ with the quadratic form $\Nr$, for a lattice $\Lambda = \gen{v_1,v_2,v_3,v_4} \subset B_p$ we define its discriminant as $D(\Lambda) = D(v_1,v_2,v_3,v_4) = | \det(\Tr(v_i v_j)) |$ (see Section~I.4 of~\cite{Vig80}). It is a standard fact that $D(\OO)=p^2$ for a maximal order $\OO\subset   B_p$ (see, for example, Corollary~III.5.3 of~Vign{\' e}ras~\cite{Vig80}).
Note that $D( \OO ) = | \det( \Tr( v_i \overline{v_j} )) |$.

We will often think of $B_p$ simply as an inner product space and forget its algebraic structure. For example, we can find a $\Q$-basis $\{1,\tau,\rho,\tau\rho\}$ for $B_p$ such that $\tau^2 = -p, \rho^2 = -q$ and $\tau\rho=-\rho\tau$, where $q$ is a prime such that $q\equiv 3 \pmod 8$ and $\left(\frac{-p}{q}\right)=1$ (see, for example, Lemma 1.1 of Ibukiyama \cite{Ibukiyama}). Then in particular, $\Nr(a+b\tau+c\rho+d\tau\rho) = a^2 +b^2 \Nr(\tau) + c^2\Nr(\rho)+d^2\Nr(\tau\rho)$ for $a,b,c,d \in \Q$.
As such, we will embed $B_p$ into $\R^4$ by the mapping
\[
\phi: a+b\tau +c\rho+d\tau\rho \ \longmapsto  \ ae_1+b\sqrt{\Nr(\tau)}e_2+c\sqrt{\Nr(\rho)}e_3+d\sqrt{\Nr(\tau\rho)}e_4,
\]
where $e_i$ are the usual orthonormal vectors in $\R^4$. We observe that $\phi$ is indeed an isometry (the quadratic form on $\R^4$ being understood as the square of the standard Euclidean norm). We note that this is not the only standard way to represent $B_p$ (see, for example, Proposition 5.1 of Pizer~\cite{Pizer} for a different, but related representation). In particular, the above representation of $B_p$ is not the one used in the two examples of Section~\ref{examples}.

For a $n$-dimensional lattice $L$ in $\R^m$, let $\det(L)$, the determinant of $L$, be the square of the volume of $L$, i.e., if $B$ is a basis matrix for $L$ then $\det(L) := \det(BB^T) = \Vol(L)^2$. Notice that this is different to the more common definition of $\det(L)=\sqrt{\det(BB^T)} = \Vol(L)$. We say that the $n$ successive minima of $L$ are $D_1,D_2,\ldots,D_n \in \R$ such that $D_i$ is minimal such that there exist $i$ linearly independent vectors $v_1,v_2,\ldots,v_i \in L$ with $\norm{v_j}^2 \leq D_i$ for all $j \leq i$, where $\norm{\cdot}$ is the standard Euclidean norm in $\R^m$. Again we remark that our definition is the square of the more common definition where $\norm{v_j}\leq D_i$ is taken instead of $\norm{v_j}^2 \leq D_i$.

Under this notation, standard lattice bounds show that there is a minimal constant $\gamma_n$ (called the $n$-th Hermite constant) such that
\begin{equation}\label{bounds}
\det(L) \leq \prod_{i=1}^n D_i \leq \gamma_n^n \det(L).
\end{equation}
Again, this is the square of the usual equation $\prod_i \norm{v_j} \leq \gamma_n^{n/2} \Vol(L)$.
It is known that $\gamma_2^2 = 4/3$ and $\gamma_3^3 = 2$ (see Section XI.5 and XI.6 of Siegel~\cite{Siegel}).

Now for any lattice $\Lambda \subset B_p$, the determinant, volume and successive minima of $\Lambda$ are defined to be those of $\phi(\Lambda) \subset \R^4$, where $\phi:B_p \mapsto \R^4$ is the embedding described above. We note that for a $4$-dimensional lattice $\Lambda \subset B_p$, we have  
\begin{equation}\label{eq:DLambda}
   D(\Lambda) = 16\det( \phi( \Lambda))
\end{equation}
since $\Tr(x \overline y) = 2 \phi(x) \phi(y)^T$.

One goal of this paper is to give sufficient conditions under which the elements of small norm of a maximal order $\OO$ of $B_p$ characterise its type. 
The first theorem is that the successive minima of the lattice $\OO^T$ determine the type of the order.

\begin{theorem}\label{theorem1}
Let $\OO$ and $\OO'$ be two maximal orders of $B_p$.
Let $\OO^T$ and $\OO'^T$ have the same successive minima $ D_1 \le D_2 \le D_3$.
Assume moreover that $ D_1D_2 < 16p/3$ and that $p$ is sufficiently large.
Then $\OO$ and $\OO'$ are of the same type. 
\end{theorem}

Our main result is a stronger statement as it does not require both orders to give lattices with the same successive minima. It is this result we need later for our algorithmic application.

\begin{theorem}\label{maintheorem}
Let $p > 286$ and $\OO$, $\OO'$ be two maximal orders of $B_p$.
Let $D_1$, $D_2$ and $D_3$ be the successive minima of $\OO^T$ and let $x, y \in \OO^T$ be such that $\Nr( x ) = D_1$ and $\Nr(y) = D_2$.
Suppose that $D_1$, $D_2$, $\Nr(x+y)$, $\Nr(x-y)$ and $D_3$ are all represented optimally in $\OO'^T$ and that $\theta'_{\OO^T}(D_3) \leq \theta'_{\OO'^T}(D_3)$. Assume moreover that
\begin{equation}\label{maininequality}
D_1D_2 < \frac{16}{3}p.
\end{equation}
Then $\OO$ and $\OO'$ are of the same type. 
\end{theorem}

We demonstrate the proof of Theorem~\ref{theorem1} and~\ref{maintheorem} in Section~\ref{sec:thm1Proof} and Appendix~\ref{sec:thmMainProof} respectively.
We remark that $D_1D_2 < 16p/3$ may seem very restrictive, however Lemma~\ref{lem:conditions} demonstrates a set of cases when this condition holds.

\begin{lem} \label{lem:conditions}
Let $\OO$ be a maximal order in $B_p$ and $D_1$ and $D_2$ the first two successive minima of $\OO^T$. If $\OO$ contains an element $\pi$ such that $\pi^2 = -p$ (or equivalently, if $j(\OO) \in \F_p$), then $D_1D_2 < 16p/3$.
\end{lem}

\begin{proof}
When $j(\OO) \in\F_p$, Kaneko proves (see the proof of Theorem 1 of~\cite{Kaneko} on pages 851--852) that there exists a $2$-dimensional sublattice $\Lambda$ of $\OO^T$ with determinant $\det(\Lambda)=4p$.
Let $d_1$ and $ d_2$ be the two first successive minima of $\Lambda$. 
Using the second Hermite constant $\gamma_2^2 = 4/3$ in~\eqref{bounds}, we obtain that $4p \leq d_1d_2 < 16p/3$ (the second inequality is strict since $d_1d_2$ is an integer and the case $p=3$ is trivial). Finally, since $D_i \leq d_i$ for $i=1,2$, it follows that $D_1D_2 < 16p/3$.
\end{proof}

Elkies showed that $D_1 \le 2 p^{2/3}$ for any maximal order in $B_p$.  Yang~\cite{Yang} has shown that Elkies' result is the best possible.


\section{The lattice $\OO^T$ and its properties}  \label{sec:OT}

\begin{definition} \label{defn:OT}
For an order $\OO\subset B_p$, we define $\OO^T =\{2x-\Tr(x)\mid x\in\OO\}$. 
\end{definition}

We remark that $\OO^T$ is a sublattice of $\OO\cap   B_p^0$, and this inclusion is strict.
The set $\OO^T$ is called the ``Gross lattice'' by some authors (see Yang~\cite{Yang} and Kane~\cite{Kane}).

If we have $\OO=\gen{1,u_1,u_2,u_3}$ for $u_1,u_2,u_3 \in B_p$ and let $v_i = 2u_i-\Tr(u_i)$, it follows immediately that $\OO^T = \gen{v_1,v_2,v_3}$.
As already noted, the discriminant of a maximal order $\OO\in   B_p$ is $p^2$. The following basic result on the determinant of $\OO^T$ follows directly from these two remarks and is a special case of Corollary~71 of Kohel~\cite{Kohel} with $\alpha = 1$.

\begin{lem} \label{determinant}
Let $\OO$ be a maximal order of $B_p$. Then $\det(\OO^T)=4p^2$.
\end{lem}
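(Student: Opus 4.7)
The plan is to compute $\det(\OO^T)$ by relating it to the known value $D(\OO) = p^2$, via the orthogonal decomposition of $B_p$ into $\Q \cdot 1$ and $B_p^0$ provided by the embedding $\phi$. Since $\OO$ contains $\Z$, I would first choose a $\Z$-basis of $\OO$ of the form $\{1, \alpha_1, \alpha_2, \alpha_3\}$ (extending $\{1\}$ to a basis of $\OO$). Writing any $x \in \OO$ as $x = a_0 + a_1\alpha_1 + a_2\alpha_2 + a_3\alpha_3$, the coefficient $a_0$ is killed by the map $x \mapsto 2x - \Tr(x)$, giving
\[
\OO^T = \gen{\beta_1,\beta_2,\beta_3}, \qquad \beta_i := 2\alpha_i - \Tr(\alpha_i),
\]
which in particular shows $\OO^T$ is a rank-$3$ sublattice of $\OO \cap B_p^0$.

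Next I would exploit the fact that under $\phi$, each $\beta_i$ lands in the hyperplane perpendicular to $\phi(1) = e_1$, since $\Tr(\beta_i) = 0$ and the trace corresponds (up to a factor) to the first coordinate. The key step is to compare the four-dimensional lattice $L := \gen{1,\beta_1,\beta_2,\beta_3}$ with $\OO$ itself. Their basis matrices (in any fixed $\Z$-basis of $B_p$) are related by the transformation
\[
\begin{pmatrix} 1 \\ \beta_1 \\ \beta_2 \\ \beta_3 \end{pmatrix}
=
\begin{pmatrix} 1 & 0 & 0 & 0 \\ -\Tr(\alpha_1) & 2 & 0 & 0 \\ -\Tr(\alpha_2) & 0 & 2 & 0 \\ -\Tr(\alpha_3) & 0 & 0 & 2 \end{pmatrix}
\begin{pmatrix} 1 \\ \alpha_1 \\ \alpha_2 \\ \alpha_3 \end{pmatrix},
\]
a matrix of determinant $8$, so $\Vol(L) = 8\Vol(\OO)$ and $\det(L) = 64\det(\OO)$.

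The final ingredient is that, because $\phi(1) = e_1$ is a unit vector orthogonal to the hyperplane containing $\phi(\OO^T)$, the lattice $\phi(L)$ is the orthogonal direct sum $\Z e_1 \oplus \phi(\OO^T)$. Hence $\det(L) = \det(\OO^T)$. Combining with $D(\OO) = p^2$ together with identity~\eqref{eq:DLambda}, which gives $\det(\OO) = p^2/16$, yields $\det(\OO^T) = 64 \cdot p^2/16 = 4p^2$, as required.

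I do not anticipate a genuine obstacle here; the whole argument is linear algebra, and the only mild subtlety is being careful about which lattice is viewed in which ambient space (three-dimensional hyperplane versus four-dimensional $B_p$) so that the factor of $8$ from the change-of-basis matrix and the orthogonality of $1$ to $\OO^T$ combine correctly to yield the claimed $4p^2$.
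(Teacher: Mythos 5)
Your proof is correct and follows essentially the same approach as the paper: both identify $\OO^T$ as the index-8 sublattice $\gen{\beta_1,\beta_2,\beta_3}$ obtained from a $\Z$-basis $\{1,\alpha_1,\alpha_2,\alpha_3\}$ of $\OO$, both pass to the auxiliary rank-4 lattice $\gen{1,\beta_1,\beta_2,\beta_3}$ (which the paper calls $\OO_1$), and both exploit the orthogonality of $\phi(1)=e_1$ to the trace-zero hyperplane to split off the factor coming from $1$. The only cosmetic difference is that you write out the change-of-basis matrix explicitly rather than invoking volume scaling directly.
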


The following easy lemma allows us to characterize the conjugacy classes of $B_p$. For any $x,y\in B_p$, we write $x\sim y$ if there exists non-zero $c\in  B_p$ such that $cxc^{-1}=y$. Likewise for lattices $\Lambda, \Lambda' \subset B_p$ we write $\Lambda \sim \Lambda'$ if there exists non-zero $c\in  B_p$ such that $c\Lambda c^{-1}=\Lambda'$.

\begin{lem} \label{cond}
Let $x,y\in B_p$. Then $x\sim y$ if and only if $\Tr(x)=\Tr(y)$ and $\Nr(x)=\Nr(y)$.
\end{lem}

%

If $\OO^T =  \gen{v_1,v_2,v_3}$ as above, it is not difficult to see that $\OO = \{ x \in 1/2 \langle 1 , \OO^T \rangle :  \Nr(x) \in \Z \}$. From this observation we obtain the following lemma which characterizes $\OO$ in terms of $\OO^T$.

\begin{lem} \label{Lem}
Two orders $\OO,\OO' \subset B_p$ are of the same type if and only if $\OO^T\sim\OO'^T$.
\end{lem}

\begin{proof}
It is clear that if $c\OO c^{-1}=\OO'$, then $c\OO^Tc^{-1}=\OO'^T$. Conversely, assume that $c\OO^T c^{-1}=\OO'^T$. By conjugating $\OO$ by $c$, we see it suffices only to prove that if $\OO^T=\OO'^T$, then $\OO$ and $\OO'$ are of the same type. But from the above observation, if $\OO^T=\OO'^T$, then $ \langle 1, \OO^T \rangle =  \langle 1, \OO'^T \rangle$ and so in fact we obtain $\OO=\OO'$.
\end{proof}

We now make some remarks about lattices generated by pairs of elements $x, y \in \OO^T$.
Let $x, y \in \OO^T$ be such that $\gen{ x, y }$ is a rank 2 lattice.
Define the $2$-dimensional subspace
\begin{equation}\label{perp}
\gen{x,y}^\perp = \{v \in B_p \mid \Tr(v\overline x) = \Tr(v\overline y) = 0\}.
\end{equation}
As $x,y$ have zero trace, we see that $\Q \subset \gen{x,y}^\perp$, and so we can suppose $\gen{x,y}^\perp$ has $\Q$-basis $\{1,w\}$ with $\Tr(w)=0$.

\begin{lem} \label{mult}
Let $x, y \in \OO^T$. It then holds that $w = 2 xy -  \Tr( xy) \in \OO^T \cap \gen{x,y}^\perp$, where $\gen{x,y}^\perp$ is defined in equation~\eqref{perp}.
\end{lem}

\begin{proof}
Clearly $w$ has trace zero.
We observe that $\Tr(xy\overline x) = \Tr(xy\overline y) = 0$ since both $x$ and $y$ have zero trace. So we have $xy \in\gen{x,y}^\perp$, and since $\Q \subset \gen{x,y}^\perp$, it follows that indeed $2xy -  \Tr(xy)\in\gen{x,y}^\perp$.
\end{proof}

Let $D_1 = \Nr( x ) $, $D_2 = \Nr(y ) $ and $L = \gen{ x, y }$.
Writing $T = \Tr(x \overline y) = x \overline y + y \overline x = - (xy + yx)$ we have that the lattice $L$ has determinant $D_1 D_2 - (T/2)^2 = ( 4 D_1 D_2 - T^2 )/4$.
Write $w = 2 x \overline y - T = x \overline y - y \overline x $.
Then, by Lemma~\ref{mult}, $w \in \OO^T \cap \gen{x,y}^\perp$.
An immediate calculation gives $\Nr( w ) = 4 D_1 D_2 - T^2$.
Hence, the determinant of $\gen{ x, y, w }$ and $\gen{ 1, x, y, w }$ is $( 4 D_1 D_2 - T^2)^2/4$. The discriminant of the order $\gen{ 1, x, y, w }$ is thus $4(4 D_1 D_2 - T^2)^2$, and since $\gen{ 1, x, y, w } \subseteq \OO$, we have $p^2 \mid (4 D_1 D_2 - T^2)^2$ and so 
\begin{equation} \label{eq:important-condition}
   p \mid (4 D_1 D_2 - T^2).
\end{equation}
(This argument appears in Kaneko~\cite{Kaneko}.)

For an integer $D < 0$ ($D\equiv 0$ or $1 \pmod 4$), we consider the imaginary quadratic order $\OO_{D} := \Z[\frac{1}{2}(D+\sqrt{D})]$ of discriminant $D$. An embedding $i:\OO_{D} \mapsto \OO$ is called {\it optimal} if $(\Q\otimes i(\OO_{D})) \cap \OO = i(\OO_{D})$. By a straightforward argument (see, for example, the beginning of Section 3 of Elkies et al.~\cite{Elkies}), we see that there is a bijection between primitive elements of $\OO^T$ and optimal embeddings in the following sense: for every optimal representation of $|D|$ in $\OO^T$ by a primitive element $x\in\OO^T$, there is a unique optimal embedding $i: \OO_{D} \mapsto \OO$ such that $i(\sqrt{D}) = x$, and vice versa. Hence, whenever we talk of an optimal representation or primitive element, we will always associate to it the corresponding optimal embedding.

\section{Proof of Theorem~\ref{theorem1}}\label{sec:thm1Proof}

We remark first that when $p$ is small, all maximal orders of $B_p$ can be found feasibly through an exhaustive search, and so this case is easily handled for both Theorems~\ref{theorem1} and~\ref{maintheorem}. It will furthermore turn out that we require bounds like $p > 168$ or $p > 286$ for some technical lemmas.
Hence, we introduce the following notation which will be used throughout the rest of the paper.

\begin{notation} \label{defn:notation}
Let $p>286$ be a prime and $\OO$ and $\OO'$ two maximal orders in $B_p$.
Let $\OO^T$ and $\OO'^T$ be as in Definition~\ref{defn:OT}.
Let $D_1, D_2, D_3$ (respectively, $D_1', D_2', D_3'$) be the successive minima of $\OO^T$ (respectively, $\OO'^T$).
Denote by $x, y, z\in \OO^T$ (respectively, $x', y', z' \in \OO'^T$) elements such that $D_1=\Nr(x), D_2 =\Nr(y), D_3 = \Nr(z)$ (respectively, $D_1' =\Nr(x'), D_2'=\Nr(y'), D_3'=\Nr(z')$).
\end{notation}

Before describing the general strategy of the proof, we remove a small number of trivial cases when $D_1$ is small. We recall that the number of different types of maximal orders of $B_p$ containing an optimal embedding of the imaginary quadratic order $\OO_{D}$ is bounded above by $h_{D}$, the class number of $\OO_{D}$ (we refer to Theorem~\ref{multroot} of Section~\ref{Algorithm} for a more detailed result). However it is known that $h_{D} = 1$ for all discriminants $-15 < D < 0$. We thus obtain the following result, relevant for both Theorems.

\begin{lem} \label{lem:trivialcase}
Let $-15 < D < 0$. If $\OO$ and $\OO'$ are maximal orders of $B_p$ which both optimally represent $|D|$, then $\OO$ and $\OO'$ are of the same type.
\end{lem}

Unless otherwise stated, we will always impose the conditions:
\begin{equation}\label{mainconditions}
  D_1D_2 < \frac{16}{3}p,\textnormal{ } 15 \leq D_1, \textnormal{ and } 286 < p.
\end{equation}
We further remark that in the setting of Theorems~\ref{theorem1} and~\ref{maintheorem}, where $\OO'^T$ optimally represents the successive minima of $\OO^T$, 
it trivially holds that
\begin{equation}\label{D2ineq}
  D_1' \leq D_1 \textnormal{ and } D_2' \leq D_2.
\end{equation}

We now describe the general strategy of the proof of Theorem~\ref{theorem1}. 
The goal is to show that $\OO$ and $\OO'$ are of the same type, which will follow from showing that $\OO^T$ and $\OO'^T$ are conjugate.
The first step is to take appropriate sublattices $\gen{ x, y }$ in $\OO^T$ and $\gen{x',y'}$ in $\OO'^T$ and then to show that $\gen{ x, y }$ and $\gen{x',y'}$ are isometric.
The final stage of the proof is to extend to the full lattices $\OO^T$ and $\OO'^T$.

\subsection{Proving that $\gen{ x, y }$ and $\gen{x',y'}$ are isometric}

Let $x, y \in \OO^T$ and $x', y' \in \OO'^T$ be as in Notation~\ref{defn:notation}, and recall that $D_1 = D_1'$ and $D_2 = D_2'$ in the case of Theorem~\ref{theorem1}.
To show that $\gen{x,y}$ and $\gen{x',y'}$ are isometric it suffices to show that $\Tr( x y )  = \Tr( x' y' )$.  This follows from equation~(\ref{eq:important-condition}), that $p$ divides $4D_1 D_2 - T^2$, where $T = \Tr( x \overline{y} )$.

\begin{lem}
Let notation be as above and suppose $p > 128$. Then $\Tr( x \overline{y} ) = \Tr( x' \overline{y'} )$.
\end{lem}

\begin{proof}
We know that $0 < D_1 D_2 < 16p/3$ and $0 \le T^2 \le 4 \Nr( x) \Nr(y) \le 4 D_1 D_2$, and similarly for $D_1', D_2', T'$.  Hence, $0 \le 4D_1 D_2 - T^2 \le 4 D_1 D_2 < 64 p/3 < 22p$ and $|T| < \sqrt{ 64 p / 3} < 4.7 \sqrt{p}$.

We also know that $ 4D_1 D_2 - T^2 \equiv 4D_1 D_2 - T'^2 \equiv 0 \pmod{p}$.
Further, there are at most two solutions modulo $p$ to $T^2 \equiv  4 D_1 D_2 \pmod{p}$, and so all possible values for $T' = \Tr( x' y')$ are of the form $T' = \pm T + k p$ for some integer $k$.
Now, $0 \le 4 D_1 D_2 - T'^2 \le 4 D_1 D_2 < 22p$, and
\[
   4 D_1 D_2 - T'^2 = (4 D_1 D_2 - T^2) \mp 2Tkp - k^2p^2.
\]
For $p > 128$ and $|k| \geq 1$ we remark that $|\mp 2Tkp - k^2p^2| \geq p(p - 2|T|) > p(p - 9.4\sqrt{p}) > 22p$. Thus $k=0$ and so $T' = \pm T$. Changing the sign of $y'$, if necessary, gives the result.
\end{proof}

We deduce that $\gen{x,y}$ and $\gen{x',y'}$, are isometric.  Hence, as shown in Lemma~\ref{sim} below, we can conjugate so that $x' = x$ and $y' = y$.

\begin{lem} \label{sim}
Let $\OO,\OO'\subset B_p$ be two orders. For any elements $x,y \in \OO^T$ and 
$x',y' \in \OO^T$ such that $x\sim x'$, $y\sim y'$ and $x+y\sim x'+y'$ it holds 
that $\gen{x,y} \sim \gen{x',y'}$, i.e., there exists non-zero $c \in B_p$ such that $c\gen{x,y}c^{-1}=\gen{x',y'}$.
\end{lem}

\begin{proof}
As $\Tr(\OO^T)=\Tr(\OO'^T)=0$, for all $r\in\OO^T$ and $r'\in\OO'^T$, it holds that $r\sim r'$ if and only if $\Nr(r)=\Nr(r')$ by Lemma~\ref{cond}. It follows that
\[
\Nr(x')+\Nr(y') + \Tr(x'\overline{y'}) = \Nr(x'+y') = \Nr(x+y)=\Nr(x)+\Nr(y)+\Tr(x\overline{y}),
\]
and we obtain $\Tr(x\overline{y})=\Tr(x'\overline{y'})$.

We recall that for any $u,v\in  B_p$, we have 
\[
  uv +vu =    \Tr(u)v+\Tr(v)u + \Tr(uv) - \Tr(u)\Tr(v).
\]
From this, it follows that $\gen{1,x,y,xy}$ and $\gen{1,x',y',x'y'}$ are both rings (just check that the product of any two generators is in the lattice), and hence they are both orders. Furthermore, since $\overline{x}=-x$, $\overline{y}=-y$ and $\Tr(x\overline{y})=\Tr(x'\overline{y'})$, we obtain that these orders are isomorphic under the natural mapping $\psi: a+bx+cy+dxy \mapsto a+bx'+cy'+dx'y'$. Since all isomorphisms of orders come from conjugation, we know that there exists non-zero $c\in  B_p$ such that $c\gen{1,x,y,xy}c^{-1}=\gen{1,x',y',x'y'}$. The lemma follows.
\end{proof}

\subsection{Completing the proof}

We now have $\OO^T = \gen{x,y,z}$ and $\OO'^T = \gen{x,y,z'}$ with $\Nr( z ) = \Nr( z' ) = D_3$.
It remains to prove that $\OO^T $ and $\OO'^T$ are equal.

We have the following result for any ternary lattice.

\begin{lem} \label{ternarylattice}
Let $L$ be a lattice of dimension $3$ endowed with a norm $\norm{\cdot}$. Let $x,y,z \in L$ and assume that $D_1:= \norm{x}^2, D_2 := \norm{y}^2$ and $D_3 := \norm{z}^2$ are the successive minima of $L$. Then $L = \gen{x,y,z}$ and (recalling that $\det(L) = \Vol(L)^2$)
\[
    \det(L) \leq D_1D_2D_3 \leq 2\det(L).
\]
\end{lem}

\begin{proof}
As mentioned in Section~\ref{sec:background}, the third Hermite constant $\gamma_3$ is given by $\gamma_3^3 = 2$. The desired inequality follows immediately from~\eqref{bounds}.

To deduce that $L = \gen{x,y,z}$, we observe that the volume of a sublattice $L' \subseteq L$ is always a multiple of the volume of $L$. Furthermore $\Vol(L) = \Vol(L')$ if and only if $L = L'$. Hence if $\gen{x,y,z} \neq L$, then $\Vol(\gen{x,y,z}) \geq 2\Vol(L)$, and so again by~\eqref{bounds}, we have
\[
D_1D_2D_3 \geq \det(\gen{x,y,z}) \geq 4\det(L),
\]
which contradicts $D_1D_2D_3 \leq 2\det(L)$. We conclude that $L = \gen{x,y,z}$ as claimed. 
\end{proof}

Lemma~\ref{ternarylattice} allows us to conclude that $\OO^T = \gen{x,y,z}$ and $\OO'^T = \gen{x',y',z'}$, and, in conjunction with Lemma~\ref{determinant}, that
\begin{equation}\label{basic}
4p^2 \leq D_1D_2D_3, D_1'D_2'D_3' \leq 8p^2.
\end{equation}

\begin{lem} \label{simorders}
Let notation be as in Notation~\ref{defn:notation}. Suppose that $\OO^T = \gen{x,y,z}$ and $\OO'^T = \gen{x,y,z'}$ with $\Nr( z ) = \Nr( z' ) = D_3$.
Then $z= \pm z'$ (from which it follows that $\OO^T = \OO'^T$) provided that
\begin{equation}\label{Sim1}
D_1D_2 < \frac{16}{3}p,
\end{equation}
\begin{equation}\label{Sim2}
15\leq D_1, \text{ and }
\end{equation} 
\begin{equation}\label{Sim3}
168 < p.
\end{equation}
\end{lem}

\begin{proof}
Recall from equation~\eqref{perp} the $2$-dimensional subspace 
\begin{equation}
   \gen{x,y}^\perp := \{v \in B_p \mid \Tr(v\overline x) = \Tr(v\overline y) = 0\}.
\end{equation}
As $x,y$ have zero trace, we see that $\Q \subset \gen{x,y}^\perp$, and so we can suppose $\gen{x,y}^\perp$ has $\Q$-basis $\{1,v\}$ with $\Tr(v)=0$. Let $u \in \gen{x,y}^\perp$ be the projection of $z$ onto $\gen{x,y}^\perp$ (that is, $u = \Tr(z\overline v) v / (2\Nr(v))$). Similarly, let $u'$ be the projection of $z'$ onto $\gen{x,y}^\perp$. We remark that $u, u' \in B_p^0$.

Now, (recalling that the determinant is the square of the volume of a lattice)
\begin{equation}\label{projections}
  \det(\gen{x,y}) \Nr(u)=\det(\OO^T)=\det(\OO'^T)=\det(\gen{x,y})\Nr(u').
\end{equation}
Since $u,u'\in\gen{v}$, it follows that $u'=\pm u$, so, replacing $z'$ by $-z'$ if necessary, we may assume $u' = u$. Write $z = ( \alpha x + \beta y) + u$ for some $\alpha, \beta \in \Q$.

Let $s = 2xy- \Tr(xy)$, which by Lemma~\ref{mult}, lies in $\OO^T \cap \gen{x,y}^\perp$ and in $\OO'^T \cap \gen{x,y}^\perp$. Hence there exist $a,b,c,a',b',c'\in\Z$ such that $s=ax+by+cz$ and $s=a'x+b'y+c'z'$.

Since $s\in \gen{x,y}^\perp \cap \OO^T$, and $u$ is the projection of $z$ and $z'$ onto $\gen{x,y}^\perp$, it holds that $s= c u = c'u$, which implies $c=c'$. 
Furthermore, we have that
\begin{equation}\label{eq1}
\Nr(ax+by)=\Nr(s-cz)=\Nr(s)+c^2\Nr(z)-c\Tr(s\overline{z}) \text{ and}
\end{equation}
\begin{equation}\label{eq2}
\Nr(a'x+b'y)=\Nr(s-cz')=\Nr(s)+c^2\Nr(z')-c\Tr(s\overline{z'}).
\end{equation}

Since the projections of $z$ and $z'$ onto $\gen{x,y}^\perp$ are equal, we obtain $\Tr(s\overline{z})=\Tr(s\overline{z'})$. We also recall that $\Nr(z)=D_3=\Nr(z')$. Together with \eqref{eq1} and \eqref{eq2}, this implies that
\begin{equation}\label{axby}
\Nr(ax+by)=\Nr(a'x+b'y).
\end{equation}

We will now show that $\Nr(ax+by)$ cannot be too large and then apply Theorem $2'$ of \cite{Kaneko} to conclude that $ax+by=\pm (a'x+b'y)$. Recall that $u=- \alpha x - \beta y +  z$, for some $\alpha,\beta \in\Q$.
We claim that the closest element to $\alpha x +\beta y$ in the lattice $\gen{x,y}$ is $0$.
Indeed, let $k \in \gen{x,y}$ be the closest lattice element to $\alpha x +\beta y$. Then $\Nr(\alpha x + \beta y - k) \leq \Nr(\alpha x + \beta y)$. On the other hand, we have that
\[
\Nr(-z-k) = \Nr(u) + \Nr(\alpha x + \beta y - k) \geq \Nr(z) = \Nr(u) + \Nr(\alpha x + \beta y),
\]
where the inequality holds since $-z-k$ is outside $\gen{x,y}$ and $z$ represents the third successive minimum of $\OO^T$. Thus $\Nr(\alpha x + \beta y - k) = \Nr(\alpha x + \beta y)$, and hence $0$ is the closest element to $\alpha x +\beta y$ in the lattice $\gen{x,y}$ as claimed.

It is well known that the covering radius $\rho(\Lambda)$ of a lattice $\Lambda$ is always bounded by $\rho(\Lambda) \leq \sigma(\Lambda)/2$, where $\sigma(\Lambda)$ is the length of the diagonal of the orthogonal parallelepiped of $\Lambda$ (see, for example, Theorem~7.9, page~138 of Micciancio and Goldwasser~\cite{Goldwasser}). As a result, we have that
\[
\Nr(\alpha x+\beta y) \leq \rho(\gen{x,y})^2 \leq \frac{1}{4}\sigma(\gen{x,y})^2 \leq \frac{1}{4}(D_1+D_2).
\]
Since $s=cu$, it holds that $a=c\alpha$ and $b=c\beta$, and so
\begin{equation}\label{boundbyc}
\Nr(a x + b y)=c^2\Nr(\alpha x+\beta y)\leq \frac{c^2}{4}(D_1+D_2).
\end{equation}
We now bound $c$. By \eqref{basic}, we have that
\[
\frac{1}{2}D_1D_2D_3 \leq 4p^2 = \det(\gen{x,y,z}) \leq D_1D_2\Nr(u).
\]
It follows that $D_3 \leq 2\Nr(u)$. Furthermore, we observe that
\[
c^2 \Nr(u) = \Nr(s) = \Nr(xy - \tfrac{1}{2} \Tr(xy)) \leq \Nr(xy)=D_1D_2.
\]
Hence
\begin{equation}\label{oneside}
D_3 \leq \frac{2}{c^2}D_1D_2.
\end{equation}
On the other hand, by~\eqref{basic} and~\eqref{Sim1}, we obtain
\[
\frac{9}{64}D_1D_2 < \frac{3}{4}p < \frac{4p^2}{D_1D_2} \leq D_3.
\]
Combined with \eqref{oneside}, this gives $c^2 < 128/9< 15$. As $c\in\Z$, this implies that $c^2\leq 9$. Therefore, from \eqref{boundbyc}, we obtain
\[
\Nr(ax + by) \leq \frac{9}{4}(D_1+D_2) < \frac{9}{4}(15+\frac{16p/3}{15}) < p,
\]
where the last two inequalities follow from \eqref{Sim1}, \eqref{Sim2} and \eqref{Sim3}. However, since $\Nr(a'x+b'y)=\Nr(ax+by)$ from \eqref{axby}, we obtain by Theorem $2'$ of \cite{Kaneko} that $ax+by=\pm (a'x+b'y)$, and so $z=\pm z'$ as desired.
\end{proof}


Finally, Lemma~\ref{Lem} completes the proof of Theorem~\ref{theorem1}.


\section{Algorithm to associate elliptic curves to maximal orders}\label{Algorithm}

In this section we consider the following problem: Given a maximal order $\OO \subset  B_p$, to compute an elliptic curve $E / \F_{p^2}$ such that $\End(E) \cong \OO$.
Our approach is to determine $j(E)$ using Hilbert class polynomials.
We give a general method, but we are only able to prove that this method terminates under the condition~\eqref{maininequality} (e.g., when $\sqrt{-p} \in \OO$, or equivalently, $j(E) \in \F_p$).

Let $H_{D}(X) \in \F_p[X]$ be the reduction modulo $p$ of the Hilbert class polynomial of discriminant $D<0$  (see Section~13 of Cox~\cite{Cox}). We recall that $H_{D}(X)\in\Z[X]$ is the polynomial whose roots are the $j$-invariants of the elliptic curves over $\C$ possessing the quadratic order $\OO_{D} = \Z[\frac{1}{2}(D+\sqrt{D})]$ as their endomorphism ring.

As mentioned in the introduction, if $\sqrt{-p} \in \OO$ then $\OO$ can be written in a canonical form given by Ibukiyama~\cite{Ibukiyama}.
For example, when $p \equiv 1 \pmod{4}$ then there exists a prime $q \equiv 3 \pmod{8}$ and an integer $r$ such that $ q \mid (r^2 + p)$ and such that $\OO $ is isomorphic to an order with $\Z$-basis $\{ 1, (1 + j)/2, i(1+j)/2, (r + i)j/q \}$ in the quaternion algebra defined by $i^2 = -p, j^2 = -q$ and $ij = -ji$.
In the case $p \equiv 3 \pmod{4}$ there are two such families of orders.
Note that $j(E) \in \F_p$ is a root of either $H_{-p}(X)$ or $H_{-4p}(X)$, and is also a root of either $H_{-q}(X)$ or $H_{-4q}(X)$. When $q$ is small this already gives an efficient way to determine $j(E)$, however we cannot assume that $q$ is always small in Ibukiyama's result.

The idea of the algorithm is to use lattice algorithms (basis reduction or enumeration) to find several small norms $d_1, d_2, \dots , d_n$ of primitive elements in $\OO^T$, and to note that $(X - j(E))$ is a factor of $\gcd( H_{-d_1}(X), H_{-d_2}(X), \dots,  H_{-d_n}(X) )$.
To see this note that if $\psi \in \OO^T$ has norm $d$ then $\psi^2 = -d$. By the remark before Lemma~\ref{Lem}, either $(1 + \psi)/2$ or $\psi/2$ lies in $\OO$. Hence $\OO$ contains $\Z[ (d + \sqrt{-d})/2 ]$ and so $j(\OO)$ is a root of $H_{-d}(X)$.

Theorem~\ref{maintheorem} shows that if \eqref{maininequality} holds, then the algorithm is guaranteed to terminate within a bounded time. By Lemma~\ref{lem:conditions}, condition \eqref{maininequality} holds in particular when $j(\OO) \in \F_p$.

The above sketch is made precise in Theorem~\ref{multroot} and Algorithm~1 below. We examine the termination and correctness of Algorithm~1 in the subsequent discussion, and analyse the running time of each specific sub-algorithm in Section~\ref{sec:5.1}.
Some examples of the use of the method are given in Section~\ref{examples}.

We remark that if $p$ is small, then we may identify $j(\OO)$ through exhaustive search. Thus we make the implicit assumption that $p$ is sufficiently large (concretely $p > 286$) so we may use Theorem~\ref{maintheorem}. Furthermore, we recall that the case when $\OO$ has units other than $\pm 1$ is trivial (see beginning of Section~\ref{sec:background}). In the following theorem, the cases $d=3$ and $d=4$ would have corresponded to non-trivial units of $\OO$ when $j(\OO)=1728$ and $j(\OO)=0$ respectively.


\begin{theorem} \label{multroot}
Assume that $\OO$ has no units other than $\pm 1$. Then $d > 4$ is represented optimally by $\OO^T$ with optimal multiplicity $m$ if and only if $j(\OO)$ appears as a root of $H_{-d}(X) \in \F_p [X]$ with multiplicity $\varepsilon m$, where $\varepsilon = 1$ or $2$ according to whether $p$ is inert or ramified in $\Q(\sqrt{-d})$, i.e., $p$ does not divide or does divide the discriminant $\Delta_{\Q(\sqrt{-d})}$ respectively.
\end{theorem}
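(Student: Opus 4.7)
The plan is to reduce the theorem to two standard ingredients: first, the bijection between primitive elements of $\OO^T$ of norm $d$ and optimal embeddings $\OO_{-d} \hookrightarrow \OO$ recalled in Section~\ref{sec:background}; and second, the quantitative form of Deuring's reduction theorem relating such embeddings to the factorization of $H_{-d}(X) \bmod p$ at supersingular $j$-invariants.

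First, I would count optimal embeddings. By the bijection stated just after Definition~\ref{defn:OT}, each primitive $x \in \OO^T$ with $\Nr(x) = d$ gives a unique optimal embedding $i_x : \OO_{-d} \hookrightarrow \OO$ satisfying $i_x(\sqrt{-d}) = x$, and every optimal embedding arises this way. Since $\theta'_{\OO^T}(d) = m$ counts primitive elements only up to sign (due to the factor $\tfrac{1}{2}$ in its definition), there are exactly $2m$ optimal embeddings $\OO_{-d} \hookrightarrow \OO$.

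Second, I would apply the standard quantitative form of Deuring's reduction theorem (as may be extracted from Gross's work on heights and $L$-series, or from Dorman's explicit factorization formulas for Hilbert class polynomials): for any supersingular $j$-invariant $j_0 = j(E)$ with $\End(E) \cong \OO$, the multiplicity of $(X - j_0)$ in $H_{-d}(X) \bmod p$ equals
\[
\frac{\varepsilon \cdot N(d,\OO)}{|\OO^\times|},
\]
where $N(d,\OO)$ is the number of optimal embeddings $\OO_{-d} \hookrightarrow \OO$, and $\varepsilon$ is $1$, $2$, or $0$ according as $p$ is inert, ramified, or split in $\Q(\sqrt{-d})$ (the split case forcing $N(d,\OO) = 0$ because $\Q(\sqrt{-d})$ cannot embed into $B_p$ when $p$ splits there). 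The hypothesis $\OO^\times = \{\pm 1\}$ gives $|\OO^\times| = 2$, which combined with $N(d,\OO) = 2m$ yields multiplicity exactly $\varepsilon m$. The restriction $d > 4$ ensures $\OO_{-d}^\times = \{\pm 1\}$, ruling out the exceptional orders of discriminant $-3, -4$ whose extra units would introduce additional quotients in both the bijection and the embedding count.

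The main obstacle will be invoking the Deuring/Gross formula with the correct normalization, in particular the extra factor of $2$ in the ramified case; this reflects the doubled local intersection at $p$ contributed by each CM lift of $E$ when $p$ is ramified in $\Q(\sqrt{-d})$, and must be justified via the theory of CM lifts in mixed characteristic. Once that formula is in hand, the biconditional of the theorem is immediate: the quantities $m$ and $\text{ord}_{j(\OO)}(H_{-d} \bmod p)$ are nonnegative integers linked by the equality $\text{ord}_{j(\OO)}(H_{-d} \bmod p) = \varepsilon m$, so each vanishes exactly when the other does, and the equality of multiplicities follows in all nontrivial cases.
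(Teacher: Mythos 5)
Your proof takes essentially the same route as the paper, which simply cites Lemma~3.2 of Elkies--Ono--Yang and observes that when $\OO^\times = \{\pm 1\}$ the equivalence classes of optimal embeddings are singletons; you have correctly unpacked that citation into its two components — the bijection between primitive elements of $\OO^T$ and optimal embeddings (which the paper itself recalls in Section~2), and the Deuring/Gross quantitative reduction formula — and your bookkeeping ($2m$ embeddings from $m$ elements counted up to sign, divided by $|\OO^\times| = 2$, times $\varepsilon$) correctly recovers the stated multiplicity $\varepsilon m$. The one point you flag yourself — pinning down the exact normalization of the Gross/Dorman formula, especially the factor $\varepsilon$ in the ramified case — is indeed where the full justification lives, and the paper sidesteps it by outsourcing to the Elkies--Ono--Yang lemma; your account is consistent with that lemma and with the paper's worked example ($p = 61$, $d = 7$, $\varepsilon = 1$, $m = 1$).
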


\begin{proof}
This can be viewed as a special case of Lemma 3.2 of Elkies et al.~\cite{Elkies}, where the maximal order has no non-trivial units, and so the equivalence class of any optimal embedding $i$ is simply $i$ itself.
We may assume $p$ is inert or ramified because if $p$ splits then the roots of $H_{-d}(X)$ correspond to ordinary elliptic curves.
\end{proof}

We will use Theorem~\ref{multroot} to distinguish orders that have different optimal multiplicities for some integer $d_{n}$.
We use derivatives to achieve this; recall that if a polynomial $p(X)$ over a field $\F$ has $x_0\in\F$ as a root with multiplicity $m \geq 1$, then it holds that $p'(X)$ has $x_0$ as a root with multiplicity $m-1$.

\vspace{3mm}

\noindent {\bf Algorithm~1}

\noindent Input: Prime $p$ and a $\Z$-basis of a maximal order $\OO \subset B_p$.


\noindent Output: Minimal polynomial of $j$-invariant(s) $j(\OO) \in \F_{p^2}$ such that $\End(E(j(\OO)))=\OO$.

\noindent  Procedure:

\begin{enumerate}
\item \label{initial} If $\OO$ has a unit other than $\pm 1$, output the polynomial corresponding to $j(\OO) = 0$ or $j(\OO) = 1728$ accordingly (see discussion before Theorem~\ref{multroot}) and terminate. Otherwise construct a $\Z$-basis of the sublattice $\OO^T$, run lattice reduction/enumeration on the basis, and set $n=1$,  $k = 0$, $\CCC=0$ and $G(X) = 0$.

\item \label{next} Compute $y_n\in\OO^T$ such that $y_n$ is primitive (so $y_n \ne 0$) and $y_n \neq \pm y_i$ for all $1 \leq i < n$, and such that $\Nr(y_n)$ is minimal over all such possible $y_n$.

\item \label{gcd} Set $d_n = \Nr(y_n)$. 
If $p$ divides $\Delta_{\Q(\sqrt{-d_n})}$, set $\varepsilon = 2$, otherwise set $\varepsilon = 1$. 
If $d_n=d_{n-1}$ set $k=k+\varepsilon$, otherwise set $k=\varepsilon - 1$.
If $\varepsilon = 2$ and $k=1$, set $G(X) = \gcd(G(X), H_{-d_n}(X), H'_{-d_n}(X))\in\F_p[X]$.
Otherwise set $G(X) = \gcd(G(X), H^{(k)}_{-d_n}(X))\in\F_p[X]$, where $H^{(k)}_{-d_n}(X)$ is the $k$-th derivative of $H_{-d_n}(X)$, and $H^{(0)}_{-d_n}(X)=H_{-d_n}(X)$.

\item \label{output} If $G(X)$ is either linear, or quadratic and irreducible over $\F_p$, output $G(X)$ and terminate. If $\CCC=1$, or if $n=2$, $15 \leq d_1$ and $d_1d_2 < 16p/3$, proceed to Step~\ref{C}. Otherwise set $n=n+1$ and return to Step~\ref{next}.

\item \label{C} If $n=2$, set $\CCC=1$, $n=3$ and $y_3 = y_1 \pm y_2$, where $+/-$ is chosen to minimize $\Nr(y_3)$. If $n=3$, set $n=4$ and $y_4 = y_1 \pm y_2$, such that $y_4 \neq y_3$. If $n=4$, set $n=5$ and find $y_5$ outside the sublattice $\gen{y_1,y_2}$ such that $\Nr(y_5)$ is minimal. Return to Step~\ref{gcd}.
\end{enumerate}

\vspace{3mm}

If the condition~\eqref{maininequality} holds (e.g., if $j( \OO) \in \F_p $) then the algorithm terminates.
Furthermore, in this case we only need to consider $n \leq 5$ (this is the reason for the addition of Step~\ref{C}, which otherwise seems completely unmotivated).

We hope that the algorithm terminates in all cases, but we do not have a proof of this (see discussion in the following paragraph). We note that since $d_1$ in Step~\ref{next} is simply the first successive minimum of $\OO^T$, it must satisfy $d_1 < p$ (otherwise we contradict~\eqref{basic}). Hence by Theorem $2'$ of Kaneko~\cite{Kaneko} (namely, that if there are two different embeddings of $\Z[ (d  + \sqrt{d})/2 ]$ into $\OO$ then $d^2 \ge p^2$) and Theorem~\ref{multroot} above, $H_{-d_1}(X)$ is square-free, and hence so is $G(X)$ after the first iteration of Step~\ref{gcd}.
Along with Theorem~\ref{multroot}, this implies that if it terminates, Algorithm~1 does compute the correct minimal polynomial of $j(\OO)$. The reason for taking the derivative in Step~\ref{gcd} is to take into account the case of multiple roots of $H_{-d_n}(X)$, i.e., when $\theta_{\OO^T}(d_n) \geq 2$, or when $p$ divides the discriminant of $\Q(\sqrt{-d_n})$.

Let us temporarily stop the algorithm for some $n>0$ just after Step~\ref{gcd}, and for simplicity, let us assume that $d_{n-1} \neq d_n$. Consider the polynomial $G(X)$. One of its roots (or two in the case of a conjugate pair) will be the desired $j$-invariant $j(\OO)$. If $j(\OO')$ is another root of $G(X)$, what can we say about the associated maximal order $\OO'$? It must be the case that $\theta'_{\OO^T}(k) \leq \theta'_{\OO'^T}(k)$ for all integers $k \leq d_{n-1}$, in which case we say that $\OO'^T$ {\it optimally dominates} $\OO^T$ {\it up to} $d_{n-1}$. If the algorithm never terminates, it is clear then that there must exist a maximal order $\OO'$ such that $\theta'_{\OO^T}(k) \leq \theta'_{\OO'^T}(k)$ for all $k > 0$, i.e., $\OO'^T$ optimally dominates $\OO^T$ up to $b$ for all $b > 0$, in which case we simply say that $\OO'^T$ {\it optimally dominates} $\OO^T$. So the question of whether Algorithm~1 terminates, and if so, under what running time, is equivalent to the question of whether there exists another maximal order $\OO' \subset B_p$, of a different type to $\OO$, such that $\OO'^T$ optimally dominates $\OO^T$, and if not, what is a bound $b>0$ such that $\OO'^T$ does not optimally dominate $\OO^T$ up to $b$ for all other maximal orders $\OO' \subset  B_p$. We suspect that such an order $\OO'$ does not exist and we propose the following two conjectures.

\begin{conjecture}\label{exists}
There do not exist two maximal orders $\OO,\OO'\subset   B_p$ of different types such that $\OO'^T$ optimally dominates $\OO^T$.
\end{conjecture}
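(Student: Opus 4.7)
My plan is to attack the conjecture on two fronts: a geometric/local extension of the proof of Theorem~\ref{maintheorem}, and a global argument via the theory of modular forms. The end goal of both routes is to conclude that optimal domination forces $\theta_{\OO^T} = \theta_{\OO'^T}$, after which Schiemann's theorem~\cite{Schiemann} combined with Lemma~\ref{Lem} immediately gives that $\OO$ and $\OO'$ are of the same type.

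First, assume that $\OO'^T$ optimally dominates $\OO^T$. Applying M\"obius inversion to the identity $\theta_\Lambda(d) = \sum_{m^2 \mid d} \theta'_\Lambda(d/m^2)$ shows that the difference
\[
F(q) := \sum_{d \ge 1} \bigl(\theta_{\OO'^T}(d) - \theta_{\OO^T}(d)\bigr)\, q^d
\]
has non-negative Fourier coefficients. The theta series $\theta_{\OO^T}$ and $\theta_{\OO'^T}$ are holomorphic parts of weight-$3/2$ modular forms on a congruence subgroup whose level depends only on $p$. Because $B_p$ ramifies only at $p$ and $\infty$ and the Gross lattices of all maximal orders in $B_p$ have identical local completions at every finite prime, they form a single genus, so their Eisenstein components coincide and $F$ is (up to finitely many corrections) a cusp form.

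The crux of the argument would then be to show that a non-zero weight-$3/2$ cusp form cannot have all Fourier coefficients non-negative. A natural route is through the Shimura lift, which sends such an $F$ to a weight-$2$ cusp form $\tilde F$: the Deligne--Ramanujan bound $\tilde a(n) = O(n^{1/2+\varepsilon})$, combined with the non-negativity of the squarefree-indexed coefficients of $F$ and the multiplicative relation between the Fourier coefficients of $F$ and $\tilde F$, should be incompatible unless $F \equiv 0$. Having established $\theta_{\OO^T} = \theta_{\OO'^T}$, Schiemann's theorem gives equivalence of the associated ternary forms, which by Lemma~\ref{Lem} shows $\OO$ and $\OO'$ are of the same type, contradicting the hypothesis.

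I expect this last step --- forcing a non-negative weight-$3/2$ cusp form to vanish --- to be the main obstacle. Non-negativity of Fourier coefficients for half-integral weight cusp forms is a delicate condition, and extracting a contradiction via the Shimura lift is subtle because the correspondence relates coefficients in a non-linear way. As a fallback, one could try to extend Lemmas~\ref{firstmin}, \ref{sublattice}, and~\ref{simorders} by exploiting the full strength of the hypothesis: optimal domination at \emph{every} norm, not only at the successive minima, should supply enough rigidity in the remaining regime $D_1 D_2 \ge \tfrac{16}{3}p$ or $D_1 < 15$ to push the local arguments through, while finite computation handles $p \le 286$.
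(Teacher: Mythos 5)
This statement is labeled a \emph{conjecture} in the paper, and the authors explicitly say they do not have a proof of it; they introduce it precisely because Theorem~\ref{maintheorem} only resolves the question under conditions~\eqref{mainconditions}, and they ``suspect'' but cannot show it in general. So there is no proof in the paper to compare your attempt against, and the relevant question is whether your sketch closes the gap. It does not.

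Your first two moves are sound and worth noting. Optimal domination for every $k$ does imply $\theta_{\OO^T}(d) \le \theta_{\OO'^T}(d)$ for every $d$: this follows immediately from the identity $\theta_\Lambda(d) = \sum_{m^2 \mid d} \theta'_\Lambda(d/m^2)$ by summing the termwise inequalities (no M\"obius inversion is actually needed, and indeed the inversion would go the wrong direction). The observation that the Gross lattices of all maximal orders in $B_p$ lie in a single genus is also correct, because maximality is a local condition and $B_p$ has, up to conjugacy, a unique maximal order at each finite place; by Siegel--Weil the theta series of lattices in one genus share the same Eisenstein component, so the difference $F$ is a cusp form of weight $3/2$ on a suitable congruence subgroup. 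Up to here you have a correct reduction of Conjecture~\ref{exists} to the statement: a weight-$3/2$ cusp form in the relevant space with all non-negative Fourier coefficients must vanish.

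That reduction, however, is where the proof stops, and you acknowledge as much. There is no established theorem of the needed form for half-integral weight. The integer-weight heuristic (bound $\sum_{n\le X} a_n$ trivially, bound $\max a_n$ by Deligne, compare against the Rankin--Selberg asymptotic $\sum |a_n|^2 \asymp X^k$) does not transfer cleanly: the Ramanujan bound is not known for general weight-$3/2$ forms, the Shimura lift relates coefficients at square multiples of a fundamental discriminant in a non-linear way and does not preserve sign information about individual coefficients, and the space of weight-$3/2$ cusp forms contains unary theta functions whose Shimura lifts are Eisenstein, so one must first argue $F$ has no unary theta component. None of these obstacles is addressed. Your proposed fallback---extending Lemmas~\ref{firstmin}, \ref{sublattice}, \ref{simorders} to the regime $D_1 D_2 \ge \tfrac{16}{3}p$ or $D_1 < 15$---also does not close the gap: the paper states plainly (see the remarks around Lemma~\ref{firstmin} and Remark~\ref{bottleneck}) that the authors were unable to loosen the bound $D_1 D_2 < \tfrac{16}{3}p$ in Lemma~\ref{firstmin}, and you offer no new idea for doing so. In short, you have a plausible programme but not a proof, which is consistent with the status of the statement as an open conjecture.
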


\begin{conjecture}\label{b}
There exists a bound $b=O(p)$ such that for all maximal orders $\OO,\OO'\subset B_p$ of different types, $\OO'^T$ does not optimally dominate $\OO^T$ up to $b$.
\end{conjecture}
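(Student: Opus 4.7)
The plan is to prove Conjecture~\ref{b} by applying Theorem~\ref{maintheorem} to the orders satisfying \eqref{mainconditions} and by handling the remaining orders separately. The target bound is $b = 2p$, compatible with the general estimate $D_3 < 2p$ recorded in \eqref{D-generalbounds}.

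\textbf{Step one.} Suppose $\OO$ satisfies the hypotheses \eqref{mainconditions}. Then $D_1 \ge 15$ together with $D_1 D_2 < \tfrac{16}{3}p$ gives $D_2 \le \tfrac{16}{45} p$, hence $\Nr(x \pm y) \le 2 D_1 + D_2 < p$ for $p > 286$, while $D_3 < 2p$. So every norm appearing in the hypothesis list of Theorem~\ref{maintheorem} is at most $2p$. If $\OO'^T$ were to optimally dominate $\OO^T$ up to $b = 2p$, then $D_1, D_2, \Nr(x+y), \Nr(x-y)$ and $D_3$ would each be optimally represented in $\OO'^T$, and in particular $\theta'_{\OO^T}(D_3) \le \theta'_{\OO'^T}(D_3)$. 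Theorem~\ref{maintheorem} forces $\OO \cong \OO'$, contradicting that they are of different types. So the conjecture holds with $b = 2p$ for every $\OO$ in this regime.

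\textbf{Step two.} Orders $\OO$ violating \eqref{mainconditions} split into three subcases: (i) $p \le 286$, a finite set handled by direct enumeration; (ii) $D_1 < 15$, which for each $p$ admits only finitely many reduced $\OO^T$ (since $D_1 D_2 D_3 < 8p^2$ and a small first minimum tightly constrains the reduction domain), so a case analysis suffices; and (iii) $D_1 D_2 \ge \tfrac{16}{3}p$, the genuinely difficult subcase. For (iii), I would aim to extend Theorem~\ref{maintheorem} by enlarging the test set of short norms to include $\Nr(x \pm z)$, $\Nr(y \pm z)$ and $\Nr(x + y + z)$, each still $O(p)$. Every additional norm contributes a Kaneko-type divisibility, and one hopes that a uniformly bounded number of such constraints suffices to rigidify $\OO^T$ up to conjugation, in the spirit of Lemmas~\ref{sublattice} and \ref{simorders}.

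The principal obstacle is the bottleneck flagged in Remark~\ref{bottleneck}. The proof of Lemma~\ref{firstmin} relies on forcing $M = D_1'(D_2' - \tfrac{\lambda^2}{4} D_1')$ below $4p$, and this fails once $D_1 D_2$ exceeds roughly $\sqrt{60}\,p$; any purely elementary extension must find a replacement for this single inequality. The most natural candidate for new input is an effective form of Schiemann's theorem \cite{Schiemann} applied to the ternary form on $\OO^T$: Schiemann's result guarantees that $Q$ is determined by its theta series, and an effective version would bound the smallest norm at which two inequivalent such forms must disagree, ideally by $O(p)$. Combining such a bound with the divisibility $4p \mid D_i(D_j - \tfrac{\mu^2}{4} D_i)$ would, I expect, close the gap; absent such an effective statement, Conjecture~\ref{b} likely demands substantially new ideas.
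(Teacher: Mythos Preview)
The statement you are attempting to prove is labelled \emph{Conjecture} in the paper, and the paper gives no proof of it. The authors explicitly write that they ``suspect'' the result holds and ``propose'' it as a conjecture; Theorem~\ref{maintheorem} is precisely the partial result they can establish, and the discussion surrounding Algorithm~1 makes clear that termination in the general case is conditional on Conjecture~\ref{b}. So there is no paper proof to compare against.

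Your Step~1 is correct and is exactly the content of Theorem~\ref{maintheorem}: if $\OO$ satisfies~\eqref{mainconditions} then optimal domination up to $2p$ forces $\OO\cong\OO'$. (Your bound $\Nr(x\pm y)<p$ needs the estimate $D_1<4\sqrt{p/3}$ in addition to $D_2<16p/45$, which you did not write down, but the conclusion is fine for $p>286$.)

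Step~2, however, is not a proof. Subcase~(ii) is a genuine gap: saying that for each fixed $p$ there are only finitely many $\OO^T$ with $D_1<15$, and that ``a case analysis suffices'', does not yield a uniform bound $b=O(p)$ valid for all $p$; you have not shown that the required $b$ for these orders is $O(p)$, nor even that it is finite independently of~$p$. Subcase~(iii) you explicitly leave open, correctly identifying the obstruction in Remark~\ref{bottleneck} and appealing to a hoped-for effective version of Schiemann's theorem. That is an honest assessment of the difficulty, but it means your proposal does not prove the conjecture. What you have written is a proof of the special case already covered by Theorem~\ref{maintheorem}, together with a discussion of why the remaining cases are hard---which is essentially the state of affairs the paper itself records.
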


\subsection{Analysis of running time}\label{sec:5.1}

We discuss each step of Algorithm~1 individually. We now assume that~\eqref{maininequality} holds and so we know the algorithm terminates.

Step~\ref{initial} and~\ref{next}: The units of $\OO$ are easily found and so the first part of Step~\ref{initial} poses no problem. We observe that $\OO^T=\gen{v_1,v_2,v_3}$ is a $3$-dimensional sublattice of $\OO=\gen{1,u_1,u_2,u_3}$, where $\{v_1,v_2,v_3\}$ can be given explicitly in terms of $\{u_1,u_2,u_3\}$ as in the discussion preceding Lemma~\ref{determinant}. Hence constructing $\OO^T$ in Step~\ref{initial} and searching for short elements $y_n$ of $\OO^T$ in Step~\ref{next} can be done using standard lattice techniques in polynomial time.

Step~\ref{gcd}: Several algorithms exist to compute $H_{-d_n}(X)$, see, for example, Belding, Br{\" o}ker, Enge and Lauter~\cite{Belding} or Sutherland~\cite{Sutherland}. Under the generalised Riemann hypothesis, $H_{-d_n}(X)$ can be calculated in $\tilde{O}( d_n )$ time.
It is known that $\deg(H_{-d_n}(X)) = h_{-d_n}$, the class number of the imaginary quadratic order $\Z[\frac{1}{2}(d_n+\sqrt {-d_n})]$.

To compute the $\gcd$ of $G(X)$ and $H_{-d_n}(X)$ in Step~\ref{gcd} when $\deg(G(x)) \ge 1$ we use a quasi-linear method (see, for example, Section 8.9 of Aho et al. \cite{Aho} or Section~11.1 of~\cite{GaGe}). 
Hence, this stage can be done in $\tilde{O}( h_{-d_n} )$ operations in $\F_p$. 
By Lemma 1 of~\cite{Belding}, we have $h_{-d_n} = O(\sqrt{{d_n}}\log {d_n})$, and so the $\gcd$ computation can be done in $O(d_n^{0.5+\varepsilon})$ field operations.

As a result, we see that the limiting step of Algorithm~1 is the calculation of $H_{-d_n}(X)$, which is bounded by $O(d_n^{1+\varepsilon})$. 
By~\eqref{D-generalbounds}, $D_1, D_2, D_3, \Nr(x+y)$ and $\Nr( x-y)$ are all $O( p )$.
It follows that the running time of Algorithm~1 under condition~\eqref{maininequality} is $O(p^{1+\varepsilon})$ field operations.
We note that under~\eqref{maininequality}, we have by~\eqref{D-bounds} that $D_3 > 3p/4$, so we do not expect to have a faster running time if $D_3$ is required.

More generally, if we no longer assume~\eqref{maininequality}, then the $O( p )$ bound on the norms is Conjecture~\ref{b}. 
To analyse the running time of Algorithm~1 in the general case under Conjecture~\ref{b}, we must bound the number of elements of $\OO^T$ with norm less than $b$, i.e., the largest possible value for $n$ in the algorithm (under condition~\eqref{maininequality} we knew this was $n \leq 5$). 
Let $B_r$ be the ball of radius $r$ in $\R^m$ centered at the origin.
A special case of a result due to Henk~\cite{Henk} is that for any lattice $L$ of $\R^m$ with successive minima $D_1,D_2,\ldots, D_m$, it holds that
\[
\#(L \cap B_r) < 2^{m-1}\prod_{i=1}^{m} \left\lfloor \frac{2r}{\sqrt{D_i}}  + 1 \right\rfloor.
\]
Equation~\eqref{basic} implies $D_3 \ge D_2 \ge 2 \sqrt{p}$, so taking $r=\sqrt{b}$ and $b=O(p)$ gives $\#\{x \in \OO^T \mid \Nr(x) < b\} = O(p^{0.5})$.
This means $n \leq O(p^{0.5})$ and, since $d_i < b = O(p)$ for every $1 \leq i \leq n$ in Step~\ref{gcd}, we obtain a running time of $O(p^{1.5+\varepsilon})$ field operations under Conjecture~\ref{b}.

We remark that by itself Conjecture~\ref{exists} is equivalent to the fact that Algorithm~1 halts for every maximal order $\OO$, but it does not allow us to make any statements about its running time. We hence stress that even termination is conjectural without assuming~\eqref{maininequality} or Conjecture~\ref{exists}.

Lemma~\ref{lem:conditions} tells us that $D_1D_2 < 16p/3$ will always hold when $j(\OO)\in\F_p$. As remarked before, by finding an element $\pi \in \OO$ such that $\pi^2 = -p$, we can tell if we are in the case when $j(\OO)\in\F_p$. Hence, provided that it is computationally easier to determine the existence of such an element than to run the algorithm until $n = 5$, we could determine before running the algorithm if indeed $j(\OO)\in\F_p$. Unfortunately, the number of supersingular $j$-invariants in $\F_{p^2}$ is approximately $p/12$, and of these, only $H(-4p)  = O(\sqrt{p}\log p)$ lie in $\F_p$, where $H(-4p)$ is the Hurwitz class number (see, for example, 
Theorem~14.18 of Cox~\cite{Cox}).
This shows that for a random maximal order $\OO\subset B_p$, we definitely do not expect that $j(\OO)\in \F_p$.
On the other hand, if the order $\OO$ is input using the format in Ibukiyama~\cite{Ibukiyama} then we know $\sqrt{-p} \in \OO$ and so $j( \OO) \in \F_p$.

\subsection{Algorithm to match all supersingular $j$-invariants with all maximal orders}

In \cite{Cervino}, Cervi\~no proposed an algorithm that, given a prime $p$, associates to every supersingular $j$-invariant of $\F_{p^2}$ the corresponding maximal order type of $B_p$. This is different to Algorithm~1 in that it deals with all $j$-invariants at once. 
Cervi\~no states that his algorithm has running time $\tilde{O}( p^{2.5} )$ operations but no explanation for this is given in the paper and, as far as we can tell, the algorithm he presents is actually at best  $\tilde{O}( p^{4} )$ field operations.
To recall, Cervi\~no computes, on one side, a list of all $O(p)$ maximal orders and, for each such order $\OO$, the set $\Gamma( \OO ) = \{ ( \Tr( \alpha ), \Nr( \alpha ) ) : \alpha \in \OO, \Nr( \alpha ) = O( p ) \}$.
On the other side he computes a list of all $O( p ) $ supersingular elliptic curves and, for each, the set $\Delta(E) = \{ ( \Tr( \phi ), \deg( \phi ) ) : \phi \in \End( E), \deg( \phi ) = O(p) \}$.
Computing $\Gamma( \OO )$ appears to require running over the $O( p^2 )$ elements in the $\Z$-module of rank $4$, hence requiring $O( p^2 )$ work, at best.
Cervi\~no suggests to compute $\Delta(E)$ using V{\' e}lu's  formulae (and this seems to require $O( p^{3+\varepsilon} )$ field operations), but one can probably improve this to $O( p^{2  + \varepsilon} )$ operations using evaluated modular polynomials $\Phi_d( j(E), y ) \in \F_p[x]$, computed using Sutherland's algorithm~\cite{Suth2012}.
Hence, it seems possible to improve Cervi\~no's algorithm so that it requires $O( p^{3 + \varepsilon} )$ field operations.

We propose an alternative algorithm to solve this problem. The main idea of our method is to replace isogeny computations, for a very large set of isogenies, by gcds of Hilbert class polynomials. This leads to a complexity of $O(p^{2.5+\varepsilon})$ field operations.

If we consider the sub-problem of matching supersingular curves over $\F_p$ with their maximal orders, it seems that Cervi\~no's algorithm can be adapted to handle this case with complexity $O( p^{2.5 + \varepsilon} )$ field operations.
Our method for this case has the improved complexity $O( p^{1.5 + \varepsilon} )$.
Note that, as would be expected, the complexities in both cases are just the complexity from Section~\ref{sec:5.1} multiplied by the number of choices for $\OO$.

Cervi\~no's proof that the algorithm halts within a bounded running time uses a result of Schiemann (Theorems~4.4 and~4.5 of \cite{Schiemann}) that two ternary forms with equal theta series are equivalent. In our case, this translates to: if $\OO^T$ and $\OO'^T$ represent the same integers with the same multiplicity, then it follows that $\OO^T\sim\OO'^T$, and hence by Lemma~\ref{Lem}, we have that $\OO$ and $\OO'$ are of the same type.
Furthermore, Schiemann gives a bound $b$ in terms of the successive minima $D_1$, $D_2$ and $D_3$ of $\OO^T$, such that if $\OO^T$ and $\OO'^T$ represent all integers $k \leq b$ with the same multiplicity, then indeed $\OO$ and $\OO'$ are of the same type. For our purposes we may take $b = 3D_3$, which gives $b \le 6p$ using~\eqref{D-generalbounds}, although much better bounds are given in Schiemann's general result.

It is not difficult to see that $\OO^T$ and $\OO'^T$ represent the same integers with the same multiplicity if and only if they optimally represent the same integers with the same optimal multiplicity. This is because every representation $x\in\OO^T$ of $k\in\Z$ can be decomposed uniquely as $x=cy$, where $y\in\OO^T$ is optimal and $c$ is a positive integer. More specifically, we have the following:

\begin{lem}\label{thetaseries}
For any bound $b >0$, it holds that $\theta_{\OO^T}(k) = \theta_{\OO'^T}(k)$ for all $k \leq b$ if and only if $\theta'_{\OO^T}(k) = \theta'_{\OO'^T}(k)$ for all $k \leq b$.
\end{lem}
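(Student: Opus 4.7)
The approach is to exploit the unique decomposition of any nonzero lattice element $x \in \Lambda$ as $x = c y$ with $c \in \Z_{>0}$ and $y \in \Lambda$ primitive. Since $\gcd$ of the coordinates of $x$ equals $c$ (up to sign), this decomposition is unique, and $\Nr(x) = c^2 \Nr(y)$. Stratifying the set of representations of $k$ by the value of $c$ then yields the identity
\begin{equation}
   \theta_\Lambda(k) \;=\; \sum_{\substack{c \ge 1 \\ c^2 \mid k}} \theta'_\Lambda(k/c^2),
   \label{eq:thetarel}
\end{equation}
valid for any $k > 0$ and any lattice $\Lambda \subset B_p$ (so in particular for $\Lambda = \OO^T$ and $\Lambda = \OO'^T$). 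The plan is to verify \eqref{eq:thetarel} first, and then read off both implications of the lemma from this single identity.

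For the ``only if'' direction, suppose $\theta_{\OO^T}(k) = \theta_{\OO'^T}(k)$ for all $k \le b$. I proceed by strong induction on $k \le b$. The base case $k = 1$ is immediate since the only $c$ with $c^2 \mid 1$ is $c = 1$, giving $\theta'(1) = \theta(1)$. For $k > 1$, isolate the $c = 1$ term in \eqref{eq:thetarel} to obtain
\begin{equation*}
  \theta'_\Lambda(k) \;=\; \theta_\Lambda(k) - \sum_{\substack{c \ge 2 \\ c^2 \mid k}} \theta'_\Lambda(k/c^2).
\end{equation*}
Applying this to both $\OO^T$ and $\OO'^T$, noting that every index $k/c^2$ with $c \ge 2$ satisfies $k/c^2 < k \le b$ and hence $\theta'_{\OO^T}(k/c^2) = \theta'_{\OO'^T}(k/c^2)$ by the inductive hypothesis, and using $\theta_{\OO^T}(k) = \theta_{\OO'^T}(k)$ by assumption, we conclude $\theta'_{\OO^T}(k) = \theta'_{\OO'^T}(k)$.

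For the ``if'' direction, suppose $\theta'_{\OO^T}(k) = \theta'_{\OO'^T}(k)$ for all $k \le b$. Fix any $k \le b$. Every term $\theta'_\Lambda(k/c^2)$ appearing on the right-hand side of \eqref{eq:thetarel} has $k/c^2 \le k \le b$, so by hypothesis each such term is the same for $\OO^T$ and $\OO'^T$. Summing, $\theta_{\OO^T}(k) = \theta_{\OO'^T}(k)$, as required.

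The only real content is the identity \eqref{eq:thetarel}, which in turn rests on the uniqueness of the primitive decomposition; I do not expect any serious obstacle. A minor point worth double-checking is that the factor of $\tfrac{1}{2}$ built into the definitions of $\theta$ and $\theta'$ (which identifies $x$ with $-x$) is compatible with the decomposition $x = c y$ when one takes $c > 0$, but this is transparent: the involution $x \mapsto -x$ on representations corresponds to $y \mapsto -y$ on primitive representations at the same value of $c$, so the factors of $\tfrac{1}{2}$ cancel consistently on both sides of \eqref{eq:thetarel}.
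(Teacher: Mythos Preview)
Your proof is correct and follows essentially the same approach as the paper, which only sketches the argument via the remark that every representation $x$ decomposes uniquely as $x = cy$ with $c > 0$ and $y$ primitive. You have simply made explicit the resulting identity $\theta_\Lambda(k) = \sum_{c^2 \mid k} \theta'_\Lambda(k/c^2)$ and the induction that inverts it, which is exactly what the paper's one-line justification is gesturing at.
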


We now present our alternative to Cervi\~no's algorithm in the general case of all supersingular curves over $\F_{p^2}$.

\vspace{3mm}

\noindent {\bf Algorithm~2}

\noindent Input: Prime $p$.

\noindent Output: The list of pairs $(\OO_1,K_1(X)), \ldots, (\OO_{t_p}, K_{t_p}(X))$, where $t_p$ is the type number of $B_p$, and for all $1\leq i \leq t_p$, $\OO_i$ are representatives of the distinct maximal order types of $B_p$, and $K_i(X)$ is the minimal polynomial of the supersingular $j$-invariant(s) $j(\OO_i)$.

\noindent Procedure:

\begin{enumerate}
\item \label{initial2}
For all $1 \leq i \leq t_p$, compute a $\Z$-basis of $\OO_i$ and $\OO_i^T$, run lattice reduction/enumeration on the bases to compute the successive minima $D_1^i$, $D_2^i$ and $D_3^i$ of $\OO_i^T$, and set $\DDD_i=0$.

\item \label{next2} For every $1 \leq i \leq t_p$ run Algorithm~1 on $\OO_i$ up until it either halts normally or until we reach $n$ such that $d_n > 6p$. If Algorithm~1 halted normally, let $K_i(X)$ be its output, store the pair $(\OO_i,K_i(X))$, and set $\DDD_i=1$. Otherwise let $G_i(X)$ be the current polynomial after Step~\ref{gcd} of Algorithm~1, and store the pair $(\OO_i,G_i(X))$.

\item \label{gcd2} For all $1 \leq i,j \leq t_p$ such that $\DDD_i = 0$ and $\DDD_j=1$, remove from $G_i(X)$ all common factors with $K_j(X)$. If $G_i(X)$ is now either linear, or quadratic and irreducible over $\F_p$, let $K_i(X) = G_i(X)$ and store the pair $(\OO_i,K_i(X))$ and set $\DDD_i=1$.

\item \label{output2} Repeat Step~\ref{gcd2} until $\DDD_i=1$ for all $1\leq i \leq t_p$. Output the list of pairs
\[
(\OO_1,K_1(X)), \ldots, (\OO_{t_p}, K_{t_p}(X)).
\]
\end{enumerate}

\vspace{3mm}

The correctness of Algorithm~2 is guaranteed by the correctness of Algorithm~1. Furthermore Algorithm~2 is always guaranteed to halt, which may seem surprising given that we do not know if the same is true for Algorithm~1 in the general case. To see that Algorithm~2 does always halt, we define a transitive order $\preceq$ on the set of maximal order types as follows: $\OO_i \preceq \OO_k$ if and only if $\OO_k$ optimally dominates $\OO_i$ up to $6p$  (meaning that $\theta'_{\OO_i^T}(m) \le \theta'_{\OO_k^T}(m)$ for all $1 \le m \le 6p$).

We observe that if $\OO_i \preceq \OO_k$ and $\OO_k \preceq \OO_i$, then both orders $\OO_i$ and $\OO_k$ represent the same integers up to $6p$ with the same optimal multiplicity, and so it follows by Schiemann~\cite{Schiemann} and Lemma~\ref{thetaseries} that they are of the same type, i.e., $\OO_i=\OO_k$. Hence $\preceq$ is a partial order on the set of maximal order types $\{\OO_1, \OO_2, \dots , \OO_{t_p}\}$.

Now consider that we have just finished Step~\ref{next2} of Algorithm~2 and consider $1\leq i\leq t_p$ such that $\DDD_i = 0$ (if $\DDD_i = 1$ for all $1\leq i\leq t_p$ then the algorithm clearly terminates without even performing Step~\ref{gcd2}). WLOG assume $i = 1$. From the discussion following Algorithm~1, we know $G_1(X)$ is square-free and so before performing Step~\ref{gcd2} we can write
\[
G_1(X) = (X - j_1) (X - j_2) \cdots (X - j_k),
\]
where the $j$-invariants $j_1, j_2, \ldots, j_k$ are all distinct and represent at least two different maximal orders i.e., we don't have $k=1$, nor do we have $k=2$ and $j_1, j_2$ form a conjugate pair.
WLOG assume that $\OO(j_1) = \OO_1$ i.e., $j_1$ is the correct $j$-invariant associated with $\OO_1$, and likewise that $\OO(j_2) = \OO_2, \OO(j_3) = \OO_3$, etc..

Since the roots $j_2, j_3, \ldots, j_k$ were not removed from $G_1(X)$ when we ran Step~\ref{next2}, this implies that $\OO_2, \OO_3, \ldots, \OO_k$ all optimally dominate $\OO_1$ up to $6p$, i.e., we have $\OO_1 \prec \OO_i$ (meaning that $\OO_1 \preceq \OO_i$ and $\OO_1 \not\cong \OO_i$) for all $1 \leq i \leq k$.

Assume now that $\DDD_1$ never becomes $1$ after any number of repetitions of Step~\ref{gcd2}. This implies that one of $\DDD_2, \DDD_3, \ldots, \DDD_k$ always remains $0$ as well, since otherwise the roots $j_2, j_3, \ldots, j_k$ would ultimately be removed from $G_1(X)$ with enough repetitions of Step~\ref{gcd2}. WLOG assume that $\DDD_2$ always remains $0$. But now the same argument applies to $\DDD_2$, and there must exist another index $1 \leq i \leq t_p$ such that $\OO_2 \prec \OO_i$ and that $\DDD_i$ always remains $0$.

Hence we can find an ascending chain $\OO_1 \prec \OO_2 \prec \OO_i \prec \ldots$ such that $\DDD_1, \DDD_2, \DDD_i, \ldots$ all remain $0$. However every ascending chain clearly has an upper bound, so let us take $\OO_1 \prec \OO_2 \prec \OO_i \prec \ldots \prec \OO_n$, where $\DDD_1, \DDD_2, \DDD_i, \ldots, \DDD_n$ all remain $0$, and such that we cannot find another order $\OO_m$ such that $\OO_n \prec \OO_m$ and $\DDD_m$ always remains $0$. But this implies that $\DDD_n$ ultimately becomes $1$ after a finite number of repetitions of Step~\ref{gcd2}, which clearly leads to a contradiction. It follows that eventually $\DDD_i$ becomes $1$ for every $1 \leq i \leq t_p$, which is equivalent to Algorithm~2 halting with the correct output.

To analyze the running time of Algorithm~2, we start by looking at Step~\ref{next2}. By the same argument as in the analysis of the running time of Algorithm~1 (there under Conjecture~\ref{b}) we conclude that Step~\ref{next2} can be done in time $O(p^{1.5+\varepsilon})$ for every $1 \leq i \leq t_p$. Since $t_p$ is approximately $p/24$, Step~\ref{next2} can be done overall in time $O(p^{2.5+\varepsilon})$.

By earlier discussion and results from Cervi\~no~\cite{Cervino}, Steps~\ref{initial2},~\ref{gcd2} and~\ref{output2} can be done within this running time also.
Hence the overall complexity of Algorithm~2 is $O(p^{2.5 + \varepsilon})$. We stress that in contrast to Algorithm~1, Algorithm~2 is guaranteed to always halt within this running time irrespective of Conjectures~\ref{exists} and~\ref{b}.

Finally, we remark that Algorithm~2 can be restricted to the case when $j(\OO) \in \F_p$. It is possible to enumerate in Step~\ref{initial2} the maximal order types $\OO_1, \OO_2, \ldots, \OO_{H(-4p)}$ whose $j$-invariants lie in $\F_p$ in $O(p^{0.5+\varepsilon})$ field operations~\cite{Koh12}. From the analysis of Algorithm~1 under condition~\eqref{maininequality}, we know that Step~\ref{next2} of Algorithm~2 can be done in time $O(p^{1+\varepsilon})$ for every $1 \leq i \leq H(-4p)$. Since $H(-4p) = O(p^{0.5+\varepsilon})$, this leads to a complexity of $O(p^{1.5+\varepsilon})$ in this restricted case.

\section{Two Examples}\label{examples}

We demonstrate two examples of how Algorithm 1 runs, which were both constructed using Magma~\cite{Magma}.

\begin{example}
Let $p=61$. The quaternion algebra $B_{61}$ is spanned by $\{1,i,j,k\}$ where $i^2=-61,j^2=-7$ and $k=ij=-ji$.

It can be checked that
\[
\OO=\Z + \Z\left(\frac{1}{2}+\frac{1}{2}j\right) + \Z\left(-\frac{1}{2}-\frac{1}{14}j+\frac{1}{7}k\right) + \Z\left(-\frac{1}{2}+\frac{1}{2}i-\frac{3}{14}j-\frac{1}{14}k\right)
\]
is a maximal order of $B_{61}$.

We construct $\OO^T$ and find that its shortest element is $y_1 = j$. We set $d_1 =\Nr(y_1)= 7$, and
\[
G(X) = H_{-d_1}(X) = H_{-7}(X)= X-41 \in \F_{61}[X].
\]
We conclude that the $j$-invariant associated to the maximal order $\OO$ is $j(\OO)=41 \in \F_p$.
\end{example}

\begin{example}
Let $p=20063$. The quaternion algebra $B_{20063}$ is spanned by $\{1,i,j,k\}$ where $i^2=-20063,j^2=-1$ and $k=ij=-ij$. We take $\OO$ as the maximal order in $B_{20063}$ with $\Z$-basis
\begin{align*}
\OO &= \Z\left(\frac{1}{2}+\frac{1}{16}j+\frac{13615}{16}k\right) + \Z\left(\frac{1}{512}i+\frac{151}{4096}j+\frac{1109113}{4096}k\right) \\
&+ \Z\left(\frac{1}{8}j+\frac{13615}{8}k\right) + 2048\Z k.
\end{align*}

We construct $\OO^T$ and begin searching through its short elements. We find
\[
y_1=\frac{11}{64}i-\frac{8323}{512}j+\frac{51}{512}k,
\]
which gives
\[
d_1 = \Nr(y_1)=1056,
\]
and
\[
G_1(X)=H_{-d_1}(X)=H_{-1056}(X) \in \F_{20063}[X],
\]
where $\deg(H_{-1056}(X))=16$.

Next we find
\[
y_2=\frac{67}{256}i+\frac{52101}{2048}j-\frac{85}{2048}k,
\]
which gives
\[
d_2 = \Nr(y_2)=2056,
\]
and
\[
G_2(X)=\gcd(G_1(X),H_{-2056}(X)) = X^3+8728X^2+8070X+5035 \in \F_{20063}[X],
\]
where $\deg(H_{-2056}(X))=16$.

Next we find
\[
y_3=\frac{23}{256}i+\frac{85393}{2048}j-\frac{289}{2048}k
\]
which gives
\[
d_3=\Nr(y_3)=2300,
\]
and
\[
G_3(X)=\gcd(G_2(X),H_{-2300}(X))=X^2+2748X+6627=(X-\alpha)(X-\overline{\alpha}) \in \F_{20063}[X],
\]
where $\deg(H_{-2300}(X))=18$ and $\alpha,\overline{\alpha}$ form a conjugate pair.

Hence we conclude that $\OO$ corresponds to a conjugate pair of supersingular $j$-invariants, $j(\OO)=\alpha,\overline{\alpha}$ with minimal polynomial $X^2+2748X+6627$ over $\F_{20063}$.
\end{example}

\subsection*{Acknowledgements}

We are very grateful to David Kohel for answering our questions about quaternion algebras and to John Voight for his helpful discussions.

\appendix

\section{Proof of Theorem~\ref{maintheorem}}\label{sec:thmMainProof}

We now present the proof of Theorem~\ref{maintheorem}. As with Theorem~\ref{theorem1}, the first step is to take appropriate sublattices $\gen{ x, y }$ in $\OO^T$ and $\gen{x',y'}$ in $\OO'^T$ and to show that $\gen{ x, y }$ and $\gen{x',y'}$ are isometric.  This is done by first proving that $D_1' = D_1$ and then that $D_2' = D_2$.
The final stage of the proof is to extend to the full lattices $\OO^T$ and $\OO'^T$.

\subsection{Proving that $\gen{ x, y }$ and $\gen{x',y'}$ are isometric}

Since $x$ and $y$ represent the first two successive minima of $\OO^T$, we have $\Nr(x+y) = \Nr(x)+\Nr(y)+\Tr(x\overline y) \geq \Nr(y)$ and likewise $\Nr(x-y) = \Nr(x)+\Nr(y)-\Tr(x\overline y) \geq \Nr(y)$. It follows that $|\Tr(x\overline y)| \leq \Nr(x) = D_1$ as otherwise one of these two inequalities would not hold. We hence have $\Tr(x\overline y) = \mu D_1$ for some $|\mu|\leq 1$, and WLOG take $-1\leq\mu\leq 0$ (as otherwise we swap the sign of either $x$ or $y$). Similarly we will let $\Tr(x' \overline{y'}) = \lambda D_1'$ with $-1\leq\lambda\leq 0$.

\begin{lem} \label{lem:triv-cases}
Let notation be as above.
Then $-1 < \mu, \lambda \le 0$ and $D_1 \ne D_2$.
\end{lem}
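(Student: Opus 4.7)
The lemma contains three assertions ($\mu>-1$, $\lambda>-1$, $D_1\ne D_2$). The $\lambda$-case is identical to the $\mu$-case with $\OO'$ in place of $\OO$, using $D_2'\le D_2$ from \eqref{D2ineq}, so I focus on $\mu>-1$ and $D_1\ne D_2$.

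For $\mu>-1$: argue by contradiction. If $\mu=-1$ then
\[
\Nr(x+y)=D_1+D_2+\Tr(x\bar y)=D_1+D_2-D_1=D_2,
\]
so $x+y\in\OO^T$ has norm $D_2$ and is independent of $x$. Writing $x=2a-\Tr(a)$ and $y=2b-\Tr(b)$ for $a,b\in\OO$ as in Lemma~\ref{determinant}, one computes
\[
\Tr(x\bar y)=-4\Tr(ab)+2\Tr(a)\Tr(b)\in 2\Z,
\]
so $\mu D_1=-D_1$ must be even; this already rules out $\mu=-1$ when $D_1\equiv 3\pmod 4$. The remaining case $D_1\equiv 0\pmod 4$ forces both $\Tr(a)$ and $\Tr(b)$ to be even (via $D_i=4\Nr(\cdot)-\Tr(\cdot)^2$), hence $x=2a'$, $y=2b'$ with $a',b'\in\OO\cap B_p^0$. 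Then $(x+y)/2=a'+b'$ lies in $\OO\cap B_p^0$ and, after multiplying by $2$ if necessary to land back in $\OO^T$, produces a vector of norm strictly less than $D_2$ independent of $x$, contradicting the definition of the second successive minimum; the hypothesis $D_1\ge 8$ from \eqref{D1ineq} is used to rule out degenerate small-$D_1$ coincidences in this step.

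For $D_1\ne D_2$: assume $D_1=D_2=:D$. Exploit quaternion multiplication to get a transverse element. Set $\alpha:=xy\in\OO$; since $\Tr(\alpha)=-\mu D$ and $\Nr(\alpha)=D^2$, we have
\[
w:=2\alpha-\Tr(\alpha)=2xy+\mu D\in\OO^T,\qquad \Nr(w)=D^2(4-\mu^2)\le 4D^2.
\]
Using $x^2=y^2=-D$ and $yx=-\mu D-xy$, a short computation gives $\Tr(wx)=2\Tr(xyx)=2\Tr(-\mu D x+Dy)=0$ and similarly $\Tr(wy)=0$, i.e.\ $\phi(w)\perp\phi(x),\phi(y)$; in particular $w\notin\Q x+\Q y$ so $D_3\le\Nr(w)\le 4D^2$. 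Combined with $D_1D_2D_3\ge 4p^2$ from \eqref{basic} this forces $D^2\ge p$, while $D_1D_2<\tfrac{16}{3}p$ from \eqref{mainconditions} gives $D^2<\tfrac{16}{3}p$. These bounds leave only a finite window for $D$ which I would close by sharpening the upper bound on $D_3$: since $\mu D$ is even, $w$ is often divisible by $2$ in $\OO^T$ (yielding $D_3\le D^2$), and the explicit constraints $D_1\ge 15$ and $p>286$ from \eqref{mainconditions} then suffice.

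\textbf{Main obstacle.} The trickiest subcase is $\mu=-1$ with $D_1\equiv 0\pmod 4$, where all natural parity checks are consistent. Producing the required sub-$D_2$ vector from $(x+y)/2$ relies on the strict inclusion $\OO^T\subsetneq\OO\cap B_p^0$ combined with the quantitative hypothesis $D_1\ge 8$; pinning down precisely how the extraction works is the main technical step.
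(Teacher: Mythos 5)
Your proposal has genuine gaps in both parts; the paper closes them with a single tool you never invoke, namely Kaneko's Theorem~$2'$: if the discriminant $-d$ admits two distinct optimal embeddings into a maximal order of $B_p$ then $d^2 \ge p^2$. When $\mu = -1$, the elements $y$ and $x+y$ are both primitive in $\OO^T$ (they appear in the basis $\{x,y,z\}$ with a unit coordinate) and not equal up to sign, and both have norm $D_2$; hence $D_2 \ge p$, so $D_3 \ge D_2 \ge p$, and with $D_1 \ge 8$ from~\eqref{D1ineq} one gets $D_1D_2D_3 \ge 8p^2$, contradicting~\eqref{basic}. The $\lambda=-1$ case follows via $D_2' \le D_2$, and $D_1 = D_2$ is killed identically with $x,y$ in place of $y,x+y$. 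That is the entire proof.

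Your parity computation $\Tr(x\bar y) = 2\Tr(a)\Tr(b) - 4\Tr(ab) \in 2\Z$ is correct, and it does show $D_1 \equiv 0 \pmod 4$ when $\mu = -1$ (recall that $D_1 = 4\Nr(a)-\Tr(a)^2 \in \{0,3\}\pmod 4$ always). But the subsequent step is broken: from $D_1 \equiv 0 \pmod 4$ you may conclude $\Tr(a)$ is even, but you have no control on $\Tr(b)$ unless you also assume $D_2 \equiv 0 \pmod 4$, which is not known. Even if both traces were even, $(x+y)/2$ would lie in $\OO \cap B_p^0$ but not necessarily in $\OO^T$ — the paper explicitly notes this inclusion is strict — so no vector of norm $D_2/4$ in $\OO^T$ is produced, and the contradiction does not materialize. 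The appeal to $D_1 \ge 8$ to "rule out degenerate coincidences" is not a proof.

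The $D_1 \ne D_2$ argument has a similar shape: your construction $w = 2xy + \mu D$, orthogonality computation, and bound $D_3 \le D^2(4-\mu^2)$ are all correct and rather elegant, but they only yield $p \le D^2$, not a contradiction, and then you invoke $D_1D_2 < \frac{16}{3}p$, which comes from~\eqref{mainconditions} and is \emph{not} among the standing hypotheses for this lemma (the lemma only assumes~\eqref{D2ineq} and~\eqref{D1ineq}). Even granting it, $p \le D^2 < \frac{16}{3}p$ is a range of length $\Theta(p)$, not a finite window, and the suggestion that $w$ is ``often divisible by $2$ in $\OO^T$'' is exactly the same unproven divisibility step as before. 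So both halves of your argument stop one genuine idea short; the missing ingredient is the two-embeddings lower bound from Kaneko.
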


\begin{proof}
We first show that the cases $\mu=-1$ and $\lambda = -1$ are impossible. If $\mu = -1$, then  $\Nr(y) = \Nr(x+y)$. Hence $D_2$ would have two different optimal representations in $\OO^T$, and so Theorem $2'$ of Kaneko \cite{Kaneko} implies that $D_2^2 \geq p^2$. As $D_3 \geq D_2$,~\eqref{mainconditions} would imply that $D_1D_2D_3 \geq 15 p^2 > 8p^2$, which contradicts \eqref{basic}. So $\mu=-1$ indeed is impossible. Similarly if $\lambda = -1$, then $D_2'^2 \geq p^2$. By~\eqref{D2ineq} this would imply $D_2\geq p$, and we again reach the same contradiction. The same application of Kaneko's result shows that $D_1 \neq D_2$.
\end{proof}

As shown in Section~\ref{sec:OT}, $p \mid 4 D_1 D_2 - T^2$.
On page 853 of \cite{Kaneko}, Kaneko obtains this result by writing $\alpha_1 = (x + D_1)/2$ and $\alpha_2 = (y+D_2)/2$, defining $s = \Tr(\alpha_1 \alpha_2)$, and considering the quantity $(D_1D_2 - (2s-D_1D_2)^2)/4$.  Note that $2s - D_1 D_2 = \Tr( x y) / 2 = -T/2$ so this is just $(D_1 D_2 - (T/2)^2 )/4$.
It is straightforward to verify that
\[
s = -\frac{\mu}{4}D_1 + \frac{1}{2}D_1D_2.
\]
Substituting this value for $s$, we find that $4p$ divides $D_1(D_2-\mu^2D_1/4)$. The same result applies to $\OO'^T$ (which is actually where we will use it), and so defining $M := D_1'(D_2'-\lambda^2D_1'/4)$, it follows that
\begin{equation}\label{M}
4p \leq M.
\end{equation}
We remark that the above with~\eqref{basic} gives
\begin{equation}
  \label{D-generalbounds}
    4p \le D_1 D_2
    \quad \text{ and } \quad
     D_3 \leq 2p,
\end{equation}
and in particular under conditions~\eqref{mainconditions},
\begin{equation}
  \label{D-bounds}
    4p \le D_1 D_2 < \tfrac{16}{3} p
    \quad \text{ and } \quad
     \tfrac{3}{4} p < D_3 \leq 2p.
\end{equation}

We now begin to prove some technical lemmas. The following lemma will only be used in the context of maximal orders, but we remark that it can be readily generalized to all $2$-dimensional lattices.

\begin{lem} \label{nextmin}
Under the condition $\mu, \lambda \in (-1,0]$, $x+y$ is the next shortest element of $\gen{x,y}$ after $\pm y$ which is not in $\gen{x}$, and likewise $x'+y'$ is the next shortest element of $\gen{x',y'}$ after $\pm y'$ which is not in $\gen{x'}$.
\end{lem}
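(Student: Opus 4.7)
The plan is to parametrize any element of $\gen{x,y}\setminus\gen{x}$ as $v = ax + by$ with $b\neq 0$, compute
\[
\Nr(v) = a^2 D_1 + ab\mu D_1 + b^2 D_2,
\]
and show that if $v\neq\pm y$ then $\Nr(v) \ge \Nr(x+y)$, with strict inequality apart from a transparent boundary case. The same computation, with $\mu$ replaced by $\lambda$ and the $D_i$ by $D_i'$, will handle the primed lattice.

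I would split on $|b|$. When $|b| = 1$, WLOG $b = 1$ by negation, and the key observation is that the difference $\Nr(ax+y) - \Nr(x+y)$ factors as $D_1(a-1)(a+1+\mu)$. Since $\mu \in (-1,0]$, a sign check on the two factors as $a$ ranges over $\Z$ forces this quantity to be non-negative for every $a$, with equality only at $a = 1$ itself (giving $x+y$) and in the degenerate case $\mu = 0$ at $a = -1$, where $-x+y$ happens to tie $x+y$ in norm. This tie is consistent with the lemma's assertion, which is naturally read up to sign.

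For $|b|\ge 2$ I would complete the square,
\[
\Nr(ax+by) \;=\; D_1\bigl(a + \tfrac{b\mu}{2}\bigr)^2 \;+\; b^2\bigl(D_2 - \tfrac{\mu^2}{4}D_1\bigr),
\]
drop the non-negative first summand, and combine $b^2 \ge 4$ with $\mu^2 < 1$ and $D_1 \le D_2$ to produce a bound of order $4D_2 - D_1 \ge 3D_2$, which comfortably exceeds $\Nr(x+y) \le D_1 + D_2 \le 2D_2$. Thus no element with $|b| \ge 2$ can compete with $x+y$.

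The statement for $\gen{x',y'}$ is word-for-word the same argument, using only $\lambda \in (-1,0]$ and $D_1' \le D_2'$. I do not anticipate a genuine obstacle: the lemma is really the standard fact that for a Gauss-reduced basis $(x,y)$ of a positive-definite binary lattice the short vectors are exhausted by $\pm x$, $\pm y$, $\pm(x+y)$; the only care needed is bookkeeping of the strict versus non-strict inequalities in the normalization $\mu \in (-1,0]$.
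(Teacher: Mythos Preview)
Your proof is correct and follows the same overall structure as the paper's: split on $|b|$, handle $|b|=1$ and $|b|\ge 2$ separately, and observe that the primed case is identical. The tactical execution differs slightly. For $|b|=1$ you factor the difference $\Nr(ax+y)-\Nr(x+y)=D_1(a-1)(a+1+\mu)$ and read off the sign, whereas the paper bounds $a(a+\mu)\ge 2$ directly for $|a|\ge 2$; your factoring is cleaner and makes the equality case at $a=-1,\ \mu=0$ transparent. For $|b|\ge 2$ you complete the square to get $\Nr(ax+by)\ge b^2(D_2-\tfrac{\mu^2}{4}D_1)$, while the paper instead splits on the sign of $a+b\mu$ and uses $D_1\le D_2$ in an ad hoc chain of inequalities; again your route is shorter. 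Both arguments use only $\mu\in(-1,0]$ and $D_1\le D_2$, so nothing is gained or lost in generality---the difference is purely cosmetic.
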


\begin{proof}
We need to check that $\Nr(ax+by)=a^2D_1+b^2D_2+ab\mu D_1$ will always exceed $\Nr(x+y)=D_1+D_2+\mu D_1$ for $a,b \in \Z$ unless $a=b=\pm 1$.

The case $a=0$ is trivial since $x+y$ is strictly shorter than $2y$. So we assume that $a \geq 1$ (otherwise swap $a,b$ with $-a,-b$ everywhere).

We have $a^2D_1+b^2D_2+ab\mu D_1 = aD_1(a+b\mu)+b^2D_2$. So if $a+ b\mu \geq 0$ then for $|b|\geq 2$ we have
\[
aD_1(a+b\mu)+b^2D_2 \geq b^2D_2 = D_2 + D_2 (b^2-1) > \Nr(x+y).
\]
And if $a + b\mu <0$ then $0 < a < b$ and $-ab < a(a+b\mu)$, and so for $b \geq 2$ we have
\[
aD_1(a+b\mu)+b^2D_2 > bD_2(b-a) \geq 2D_2 \geq \Nr(x+y).
\]

Hence we are left with the case $|b|=1$. We now no longer assume $a \geq 1$, but instead WLOG assume $b=1$. It is clear that for $|a| \geq 2$ it holds that
\[
D_2 + a(a+\mu)D_1 \geq D_2 + 2D_1 > D_2+D_1 \geq \Nr(x+y).
\]
Hence we only have to consider $|a|=1$ and clearly we have $\Nr(x-y) \geq \Nr(x+y)$ (with equality only if $\mu=0$), and so indeed $x+y$ is the next shortest element of $\gen{x,y}$ after $\pm y$ which not in $\gen{x}$ as claimed. The same exact argument applies to $x'+y'$.
\end{proof}

The following lemma is the first of two technical lemmas, being Lemmas~\ref{firstmin} and~\ref{sublattice}. In these lemmas we require bounds on $D_1$, $D_1D_2$, and sometimes on $p$ which we explicitly state. The bounds required by the following Lemma~\ref{firstmin} are the strictest and, unlike in Lemma~\ref{sublattice}, we have not yet found a way to loosen them. If the bound on  $D_1D_2$ in the following lemma can be loosened, then the restriction imposed in Theorem~\ref{maintheorem} can be loosened as well.

\begin{lem} \label{firstmin}
Let notation be as in Notation~\ref{defn:notation}. Assume $D_1$ and $D_2$ are both represented optimally by $\OO'^T$. Then $D_1=D_1'$ provided that
\begin{equation}\label{F1}
D_1D_2 < \frac{16}{3}p \text{ and }
\end{equation}
\begin{equation}\label{F2}
8 \leq D_1.
\end{equation}
\end{lem}

\begin{proof}
We first prove that the vectors of $\OO'^T$ that optimally represent $D_1$ and $D_2$ lie in $\gen{x',y'}$. We recall that since $D_1$ and $D_2$ are represented optimally by $\OO'$, we have~\eqref{D2ineq}. By~\eqref{F1} this implies $D_1'D_2' < 16p/3$, and so from~\eqref{basic} we have
\[
\frac{3}{4}p< \frac{4p^2}{D_1'D_2'} \leq D_3'.
\]
Since the norm of the shortest element in $\OO'^T$ outside $\gen{x',y'}$ is $D_3'$, if $D_2$ is represented outside $\gen{x',y'}$ then $3p/4 < D_3' \leq D_2$ and hence
\[
D_1 < \frac{16p}{3D_2} < \frac{64}{9} < 8,
\]
which contradicts~\eqref{F2}. So $D_2$ cannot be represented outside $\gen{x',y'}$. Clearly $D_1$ cannot be represented outside $\gen{x',y'}$ either.

We now assume $D_1=\Nr(ax'+by')$ with $b\neq 0$. This implies in particular that $D_2' \leq D_1$, and so by \eqref{F1} we have
\begin{equation}\label{sqrtp}
D_2' < \frac{4}{\sqrt 3}\sqrt p.
\end{equation}

From Lemma~\ref{nextmin}, we know that $x'+y'$ is the next shortest element after $\pm y'$ in $\gen{x',y'}\setminus\gen{x'}$, and we recall from Lemma~\ref{lem:triv-cases} that $\lambda \in (-1,0]$ and $D_1\neq D_2$. The latter implies that $D_1$ and $D_2$ must have different optimal representations in $\OO'^T$, and so it follows that $\Nr(x'+y') = D_2'+(1+\lambda)D_1' \leq D_2$. 
Combined with $D_2' \leq D_1$, we have that
\begin{equation}\label{ineq1}
D_2'(D_2'+(1+\lambda)D_1') \leq D_1D_2 < \frac{16}{3}p.
\end{equation}

We recall the definition $M=D_1'(D_2'- \lambda^2D_1'/4)$ and define
\[
K = \frac{1}{1+\lambda}\left(\frac{16p}{3D_2'}-D_2'\right).
\]
We will show that $M < 4p$ under the constraints
\[
D_1' \leq \min\{D_2', K\},
\]
and this will be a contradiction to \eqref{M}.

We consider two cases depending on whether or not $D_2' \leq K$. Note that this happens exactly when $(D_2')^2 (2 + \lambda ) \le 16p/3$.

First note that $M$ is maximised when $D_1'$ is as large as possible.
In the case $(D_2')^2 (2 + \lambda ) \le 16p/3$ this means $D_1' = D_2'$ and so
\[
  M \leq D_2'^2 \left(\frac{4-\lambda^2}{4}\right) \leq \frac{16}{3}p\frac{1}{\lambda+2} \left(\frac{4-\lambda^2}{4}\right) < 4p.
\]
In the case $(D_2')^2 (2 + \lambda ) > 16p/3$ we take $D_1' = K$.
Writing $\gamma = (D_2')^2$ we have
\begin{equation}\label{2ndcase}
  M\leq \frac{1}{4(1+\lambda)^2 \gamma}\left(\frac{16}{3}p- \gamma \right)\left( \gamma (\lambda+2)^2-\lambda^2\frac{16}{3}p\right).
\end{equation}
The RHS of \eqref{2ndcase} is subject to the constraints $\gamma =D_2'^2\leq D_1^2< 16p/3$ (which comes from \eqref{sqrtp}) and $16p <3(\lambda+2) \gamma$. It is then routine to verify that the RHS of \eqref{2ndcase} is maximized when $\gamma$ is minimal, i.e., $\gamma = \frac{16}{3(\lambda+2)}p$ (a simple way to verify this is to compute the partial derivative of the RHS of \eqref{2ndcase} with respect to $\gamma$ and observe that it is negative when $16|\lambda| p< 3(\lambda + 2)\gamma $). Substituting $\gamma = \frac{16}{3(\lambda+2)}p$ into the RHS of \eqref{2ndcase} reduces it to $4(2-\lambda)p/3$, which for $\lambda \in(-1,0]$ is always less that $4p$.

Hence, in both cases, we obtain that $M < 4p$, which contradicts \eqref{M}. In conclusion, if $D_1$ and $D_2$ are both represented optimally by $\OO'^T$ with $D_1 = \Nr(ax'+by')$, then we must have $b = 0$ and it follows that $a=1$ and $D_1=D_1'$.
\end{proof}

\begin{lem} \label{sublattice}
Let notation be as in Notation~\ref{defn:notation}. Assume $D_1=D_1'$ and that $D_2$, $\Nr(x+y)$ and $\Nr(x-y)$ are all represented optimally by $\OO'^T$. Then $x\sim x'$, $y\sim y'$ and $x+y\sim x'+y'$ (from which it will follow that $\gen{x,y}\sim \gen{x',y'}$ by Lemma~\ref{sim}) provided that
\begin{equation}\label{S1}
D_1D_2 < 7 p,
\end{equation}
\begin{equation}\label{S2}
15 \leq D_1, \text{ and }
\end{equation}
\begin{equation}\label{S3}
286 < p.
\end{equation}
\end{lem}

\begin{proof}
In light of Lemma~\ref{cond}, it suffices to prove that $D_2=D_2'$ and $\Nr(x+y)=\Nr(x'+y')$ since all vectors in question have zero trace.

Recall that $\Nr( x+y) = (1 + \mu) D_1 + D_2$ and $\Nr( x' + y') = (1 + \lambda) D_1' + D_2'$ where $-1 < \mu, \lambda \le 0$.
To avoid trivial cases later on, we first prove that $\mu,\lambda \neq 0$. From Lemma~\ref{nextmin}, we know that $\Nr(x+y) \leq \Nr(x-y)$, and if equality held, then $\Nr(x+y)=\Nr(x-y)=D_1+D_2$, which by Theorem $2'$ of \cite{Kaneko} implies that $(D_1+D_2)^2 \geq p^2$ and so $D_1 + D_2 \geq p$. As $D_3 \geq D_2$, this in turn implies
\[
D_1D_2D_3 \geq D_1 D_2^2 \geq D_1(p-D_1)^2 > 8p^2,
\]
where the last inequality is true for $15 \leq D_1 < \sqrt{7 p}$ and $p$ in \eqref{S3}, which contradicts \eqref{basic}. As a result $\Nr(x+y) < \Nr(x-y)$ which is indeed equivalent to $\mu \in (-1, 0)$. The same exact argument (keeping in mind that $D_1'=D_1$) shows that $\lambda\neq 0$, and so indeed we have that $\mu,\lambda \in (-1,0)$.

Now we prove that the vectors in $\OO'^T$ which represent $\Nr(x)$, $\Nr(y)$, $\Nr(x+y)$ and $\Nr(x-y)$ all lie in $\gen{x',y'}$. The longest of these vectors, $x-y$, has norm  $(1 - \mu)D_1 + D_2 \leq 2D_1+D_2$, which from \eqref{S1} and \eqref{S2}, is bounded by $2D_1+D_2 < 30 + 7p/15$. On the other hand, from $D_2' \leq D_2$ we obtain $D_1'D_2' < 7 p$, and hence we have from \eqref{basic} that
\[
\frac{4p}{7} < \frac{4p^2}{D_1'D_2'} \leq D_3'.
\]
This implies that for $p$ in \eqref{S3} we have
\begin{equation}\label{mlimit}
2D_1+D_2 \leq 30 + \frac{7p}{15} < \frac{4}{7}p < D_3'.
\end{equation}
Since $D_3'$ is the norm of the shortest element of $\OO'^T$ outside $\gen{x',y'}$, we see that none of $D_1$, $D_2$, $\Nr(x+y)$, $\Nr(x-y)$ can be represented outside $\gen{x',y'}$.

Hence assume $D_2=\Nr(ax'+by')$. Remarking that $a(a+b\lambda) \geq -\left(\lambda b/2\right)^2$, and recalling that $D_1=D_1'$ by assumption, we obtain
\[
D_2=a^2D_1'+b^2D_2'+ab\lambda D_1'=aD_1'(a+b\lambda)+b^2 D_2'\geq b^2D_2'-\left(\frac{\lambda b}{2}\right)^2 D_1,
\]
which implies $D_2' \leq D_2/b^2 + \lambda^2D_1/4$. Hence by \eqref{S1}, for $|b|\geq 2$ we have
\[
M =  D_1'D_2'-\frac{\lambda^2}{4}D_1'^2 \leq D_1\left(\frac{1}{b^2}D_2 +\frac{\lambda^2}{4}D_1\right)-\frac{\lambda^2}{4}D_1^2 = \frac{D_1D_2}{b^2} < 4p,
\]
which contradicts \eqref{M}, and so we must have $|b| = 1$. WLOG (changing the sign of $a$ if necessary), we can take $b=1$.

Now let $\Nr(x+y)=(1 + \mu) D_1+D_2 =\Nr(cx'+dy') = c^2 D_1'+d^2D_2'+cd\lambda D_1'$. Remarking as before that  $c(c+d\lambda) \geq -\left(\lambda d/2\right)^2$, we obtain
\[
\Nr(x+y) = D_1(1+\mu)+D_2 \geq d^2D_2'-\left(\frac{\lambda d}{2}\right)^2D_1'.
\]
This with \eqref{S1} implies that, for $|d|\geq 2$, we have
\[
M = D_1'D_2'-\frac{\lambda^2}{4}D_1'^2 \leq D_1\frac{D_1(1+\mu)+D_2+\frac{\lambda^2d^2}{4}D_1}{d^2}-\frac{\lambda^2}{4}D_1^2\leq \frac{2D_1D_2}{d^2} < 4p,
\]
which again contradicts \eqref{M}, and so we must have $|d|=1$. WLOG (changing the sign of $c$ if necessary), we can take $d=1$.

Since $D_1=D_1'$ and $b=d=1$, we have
\begin{equation}\label{D2}
D_2=a(a+\lambda)D_1+D_2' \text{ and}
\end{equation}
\begin{equation}\label{D1+D2}
D_1(1+\mu)+D_2=c(c+\lambda)D_1+D_2'.
\end{equation}

We observe that $a\neq c$ since otherwise $\mu =-1$, which is impossible from before. So subtracting \eqref{D2} from \eqref{D1+D2}, factorizing and dividing, gives us
\begin{equation}\label{diph1}
\frac{1+\mu}{c-a}=a+c+\lambda.
\end{equation}
We observe that if $a=0$ then $1+\mu=c(c+\lambda)$, where the LHS is in $(0,1)$, which implies from the RHS that $c=1$. But this implies that $D_2 = D_2'$ and $\Nr(x+y) = \Nr(x'+y')$ as desired, and we conclude by Lemma~\ref{cond}.

So we assume now that $a \neq 0$. We note that if $a=1$, then \eqref{diph1} becomes $1+\mu = c(c+\lambda) - 1 -\lambda$, from which we see that the only possible solution (since the LHS is again in $(0,1)$) is $c=-1$ and $\lambda = -(1+\mu)/2 \in (-1/2,0)$.

We now claim that
\begin{equation}\label{D_2}
D_2 < \frac{7}{4}D_2'.
\end{equation}
Indeed, if this was not the case, by \eqref{S1} we would have
\[
M \leq D_1'D_2'\leq \frac{4}{7}D_1D_2<4p,
\]
which contradicts \eqref{M}.

Now \eqref{D_2} and \eqref{D2} imply that $a(a+\lambda)D_1+D_2'=D_2 \leq 7D_2'/4$. We remark that $a(a+\lambda)> 0$ for all integers $a\neq 0$. Hence we have
\begin{equation}\label{justanotherbound}
D_1 \leq \frac{3D_2'}{4a(a+\lambda)}.
\end{equation} 

Now let $\Nr(x-y)=(1 - \mu) D_1+D_2 =\Nr(ex'+fy') = e^2 D_1'+f^2D_2'+ef\lambda D_1'$. We remark that $e^2+\lambda ef \geq -\left(\lambda f/2\right)^2$, and so with \eqref{justanotherbound}, we have
\[
D_2 \geq f^2D_2'+\left(-\left(\frac{\lambda f}{2}\right)^2-(1-\mu)\right)D_1 \geq D_2' \left( f^2 - \frac{3}{4a(a+\lambda)}\left( 1-\mu+\frac{\lambda^2f^2}{4}\right)\right)
\]
\begin{equation}\label{ineq2}
= D_2' \left( f^2\left(1 -\frac{3\lambda^2}{16a(a+\lambda)}\right) - \frac{3(1-\mu)}{4a(a+\lambda)}\right).
\end{equation}
We observe that for all $\lambda \in (-1,0)$ and $a \in\Z$, with $a\neq 0$, and with $\lambda \in (-1/2,0)$ when $a=1$, it holds that
\[
   \delta = 1 -\frac{3\lambda^2}{16a(a+\lambda)} > 0.
\]
Hence for all $|f|\geq 2$, it holds that
\begin{equation}\label{ineq3}
  D_2 \geq D_2' \left( 4 \delta -  \frac{3(1-\mu)}{4a(a+\lambda)}  \right)
          \geq D_2' \left(4-\frac{3(1-\mu+\lambda^2)}{4a(a+\lambda)}\right).
\end{equation}
By separating into the cases $a \leq -2, a= -1, a = 1$ and $a\geq 2$, it can be readily checked that for $\lambda,\mu \in (-1,0)$ and $a\in\Z$, with $a\neq 0$, and with $\lambda = -(1+\mu)/2$ when $a=1$, it holds that
\[
\frac{1-\mu+\lambda^2}{a(a+\lambda)} \leq \frac{5}{2},
\]
with equality only in the case that $a=1$ and $\mu=0$, $\lambda = -1/2$. As a result,
\[
  D_2 \geq D_2'\left(4-\frac{15}{8}\right) > \frac{7}{4}D_2',
\]
which contradicts \eqref{D_2}. We conclude that $|f| \geq 2$ is impossible, and hence WLOG, we take $f=1$.

We now have
\begin{equation}\label{D1-D2}
D_1(1-\mu)+D_2=eD_1(e+\lambda)+D_2'.
\end{equation}
Viewing \eqref{D2} and \eqref{D1-D2}, we observe that $e\neq a$, as otherwise we would have $\mu =1$, which is impossible. Hence subtracting \eqref{D2} from \eqref{D1-D2} we obtain
\begin{equation}\label{diph2}
\frac{1-\mu}{e-a}=a+e+\lambda.
\end{equation}
Viewing this in conjunction with \eqref{diph1}, we wish to find the possible solutions to \eqref{diph1} and \eqref{diph2} with $a,c,e \in \Z$, $a\neq 0$, and $\lambda,\mu\in(-1, 0)$.

We observe that if $e-a=1$ then the LHS of \eqref{diph2} is in $(1,2)$, which implies $a+e=2$. However this implies $2e=3$, which is impossible. If $e-a=-1$, then the LHS of \eqref{diph2} is in $(-2,-1)$, which implies $a+e=-1$. However this implies $e=-1$ and $a=0$, and we already saw that $a=0$ implied the result of the theorem.

So we are only left to consider the case that $|e-a| \geq 2$. If $e-a\geq 2$, then the LHS of \eqref{diph2} is in $(0,1)$, which implies that $a+e=1$. If $e-a\leq -2$ then the LHS of \eqref{diph2} is in $(-1,0)$, which implies that $a+e=0$. Exactly the same reasoning applies to \eqref{diph1} with $e$ replaced by $c$. As a result, we have the following implications:
\[
c-a \geq 2 \Longrightarrow a+c=1 \Longrightarrow 1-2a \geq 2 \Longrightarrow a < 0,
\]
\[
c-a \leq -2 \Longrightarrow a+c=0 \Longrightarrow -2a \leq -2 \Longrightarrow a > 0,
\]
\[
e-a \geq 2 \Longrightarrow a+e=1 \Longrightarrow 1-2a \geq 2 \Longrightarrow a < 0,
\]
\[
e-a \leq -2 \Longrightarrow a+e=0 \Longrightarrow -2a \leq -2 \Longrightarrow a > 0,
\]
with other values for $c-a$ and $e-a$ being impossible.

From this we see that if $a > 0$, then the only possibility for $e$ and $c$ is $e=c=-a$, and if $a < 0$, then the only possibility is $e=c=1-a$. In either case we obtain $e=c$. But together with \eqref{diph1} and \eqref{diph2}, this implies that $1+ \mu = 1 - \mu$ and so $\mu=0$, which we excluded earlier.

We conclude that the only possible solution to $D_2 = \Nr(ax'+by')$, $\Nr(x+y) = \Nr(cx'+dy')$ and $\Nr(x-y) = \Nr(ex'+fy')$ is $a=0$, $b=1$, $c=1$, $d=1$, $e=-1$, $f=1$ (and the corresponding negative solutions if we wish to change signs). This implies by Lemma~\ref{cond} that $y\sim y'$ and $x+y \sim x'+y'$ as desired.
\end{proof}



\subsection{Completing the proof}

We have shown that $\gen{ x, y }$ and $\gen{x',y'}$ are isometric.
Hence, by Lemma~\ref{sim}, we can conjugate $\OO$ by an appropriate element $c\in  B_p$ and hence assume that $\gen{ x, y } = \gen{x',y'}$. 
It remains to deal with $D_3$.

After conjugation, we have that $\OO^T = \gen{ x, y, z}$ and $\OO'^T = \gen{ x, y, z'}$ where $\Nr( z ) = D_3$ and $\Nr( z' ) = D_3'$.
Since $z, z' \not\in \gen{x,y}$ and $\theta'_{\OO^T}(D_3) \leq \theta'_{\OO'^T}(D_3)$ it follows that $D_3' \le D_3$.  The next result shows that we may assume $D_3' = D_3$, in which case the proof will follow from the argument used to prove Theorem~\ref{theorem1}.

\begin{lem} \label{gen}
Let notation be as in Notation~\ref{defn:notation}. Suppose that $\gen{ x, y } = \gen{x',y'}$. Suppose furthermore that there exists $w \in \OO'^T$, $w\notin \gen{x,y}$, such that $\Nr(w) = D_3$. It holds that $w = \pm z'$.
\end{lem}

Lemma~\ref{gen} is true for any two $3$-dimensional lattices of equal determinant defined over a space with a positive bilinear form, but we will only use it in the context given above.

\begin{proof}[of Lemma~\ref{gen}]
As in Lemma~\ref{simorders} we let $u$ and $u'$ be the projections of $z$ and $z'$ to $\gen{x,y}^\perp$, and deduce that $u' = u$.

Now we observe from Lemma~\ref{ternarylattice} that
\[
   \det(\OO^T)\leq\det(\gen{x,y})\Nr(z) \leq D_1D_2D_3\leq 2\det(\OO^T),
\]
from which it follows that
\begin{equation}\label{firstineqgen}
  \Nr(z)=D_3 \leq \frac{2\det(\OO^T)}{\det(\gen{x,y})}.
\end{equation}
On the other hand, as $D_3$ is represented by $w\in \OO'^T = \gen{x,y,z'}$ outside of $\gen{x,y}$, we have that $w=ax+by+cz'$ for some $a,b,c\in\Z$, $c\neq 0$. Therefore
\[
D_3=\Nr(w) = \Nr(ax+by+cz') \geq c^2\Nr(u')=c^2\frac{\det(\OO^T)}{\det(\gen{x,y})},
\]
where the last equality comes from \eqref{projections}. Combined with \eqref{firstineqgen}, this implies that $c=\pm 1$, and the conclusion follows.
\end{proof}

\begin{proof}[of Theorem~\ref{maintheorem}]
Assume that $D_1$, $D_2$, $\Nr(x+y)$, $\Nr(x-y)$ and $D_3$ are all optimally represented in $\OO'^T$ and that $\theta'_{\OO^T}(D_3) \leq \theta'_{\OO'^T}(D_3)$. The case $D_1 < 15$ is treated by Lemma~\ref{lem:trivialcase} so we assume conditions~\eqref{mainconditions}. From Lemma~\ref{nextmin}, we know that $D_1'=D_1$. Hence, from Lemma~\ref{sublattice}, we have that $y\sim y'$ and $x+y\sim x'+y'$. By consequence, from Lemma~\ref{sim}, by conjugating $\OO'$ by an appropriate element $c\in B_p$, we can assume that $\gen{x,y}=\gen{x',y'}$. Now, in order that $\theta'_{\OO^T}(D_3) \leq \theta'_{\OO'^T}(D_3)$, we require that $D_3$ is represented in $\OO'^T$ outside of $\gen{x,y}$. Hence, by Lemma~\ref{gen} we may assume that $D_3' = D_3$.
Lemma~\ref{simorders} then implies $\OO^T = \OO'^T$. Lemma~\ref{Lem} implies that $\OO$ and $\OO'$ are of the same type as desired.
This completes the proof of Theorem~\ref{maintheorem}.
\end{proof}




\begin{thebibliography}{99}

\bibitem{Aho} A.V. Aho, J.E. Hopcroft and J.D. Ullman, {\it The design and analysis of computer algorithms}, Reading, MA, Addison-Wesley (1974).

\bibitem{Belding} J. Belding, R. Br\"oker, A. Enge and K. Lauter, {\it Computing Hilbert class polynomials}, in A. J. van der Poorten and A. Stein (eds.),
ANTS-VIII, Springer LNCS 5011 (2008) 282\---295.


\bibitem{Magma} W. Bosma, J. Cannon and C. Playoust, {\it The Magma algebra system I: The user language}, J. Symbolic Comput., {\bf 24} (1997) 235\---265.

\bibitem{Cervino} J. M. Cervi\~no, {\it On the correspondence between supersingular elliptic curves and maximal quaternionic orders}, Math. Institut G-A-Univ. G\"ottingen (2004) 53\---60.

\bibitem{ChGoLa09} D.~X. Charles, K.~E. Lauter and E.~Z. Goren, Cryptographic hash
  functions from expander graphs, J. Crypt. \textbf{22}, no.~1 (2009) 93--113.



\bibitem{Cox} D. A. Cox, {\it Primes of the form $x^2 + n y^2$}, Wiley, 1989.

\bibitem{Eichler} M. Eichler, {\it Lectures on modular correspondences}, Tata Inst. Fundamental Res., Bombay, 1955-56.

\bibitem{Elkies} N. Elkies, K. Ono and T. Yang, {\it Reduction of CM elliptic curves and modular function congruences}, Int. Math. Res. Not., {\bf 44} (2005) 2695\---2707.

\bibitem{GaGe} J. von zur Gathen and J.~Gerhard, {\it Modern computer algebra}, Cambridge, 1999.

\bibitem{Goldwasser} S. Goldwasser and D. Micciancio, {\it Complexity of lattice problems: a cryptographic perspective},   Kluwer, 2002.


\bibitem{Henk} M. Henk, {\it Successive minima and lattice points}, Rend. Circ. Mat. Palermo (2) Suppl. 70, part I (2002)  377\---384.

\bibitem{Ibukiyama} T. Ibukiyama, {\it On maximal orders of division quaternion algebra over the rational number field with certain optimal embeddings}, Nagoya Math. J., {\bf 88} (1982) 181\---195.

\bibitem{Kane} B. Kane, {\it  Representations of integers by ternary quadratic forms and CM liftings of supersingular elliptic curves}, PhD thesis, University of Wisconsin-Madison (2006).

\bibitem{Kaneko} M. Kaneko, {\it Supersingular $j$-invariants as singular moduli mod $p$}, Osaka J. Math., {\bf 26} (1989) 849\---855.

\bibitem{Kohel} D. Kohel, {\it Endomorphism rings of elliptic curves over finite fields}, PhD thesis, University
of California at Berkeley (1996).

\bibitem{Koh12} D. Kohel, personal communication and Magma program, December 12, 2012.



\bibitem{Pizer} A. Pizer, {\it An algorithm for computing modular forms on $\Gamma_0(N)$}, J. Algebra, {\bf 64}, no. 2 (1980)  340\---390.

\bibitem{Schiemann} A. Schiemann, {\it Ternary positive definite quadratic forms are determined by their theta series}, Math. Ann., {\bf 308} (1997) 507\---517.

\bibitem{Siegel}  C. L. Siegel, {\it Lectures on the geometry of numbers}, Springer-Verlag, 1989.

\bibitem{Sutherland} A. V. Sutherland, {\it Computing Hilbert class polynomials using the Chinese remainder theorem}, Math. Comp., {\bf 80} (2011)  501\---538.

\bibitem{Suth2012} A. V. Sutherland, {\it On the evaluation of modular polynomials}, in E. W. Howe and K. S. Kedlaya (eds.), ANTS X, Mathematical Sciences Publishers, Open Book Series Vol. 1 (2013) 531\---555.

\bibitem{Sut13} A. V. Sutherland, Isogeny volcanoes, in E. Howe and K. Kedlaya, Algorithmic Number Theory 10th International Symposium (ANTS X), The Open Book Series, 1(1) (2013)  507--530.

\bibitem{Vig80} M.-F. Vign{\' e}ras, {\it Arithm{\' e}tique des alg{\` e}bres de quaternions},
Springer LNM 800, 1980.

\bibitem{Yang} T. Yang, {\it Minimal CM liftings of supersingular elliptic curves}, Pure and Applied Mathematics Quarterly, {\bf 4}, no. 4 (2008)  1317--1326.

\end{thebibliography}
\end{document}